\newtheorem{theorem}{Theorem}[section]
\newtheorem{corollary}[theorem]{Corollary}
\newtheorem{lemma}[theorem]{Lemma}
\newtheorem{prop}[theorem]{Proposition}
\theoremstyle{definition}
\newtheorem{definition}[theorem]{Definition}
\newtheorem{example}[theorem]{Example}
\newtheorem{remark}[theorem]{Remark}
\DeclareMathAlphabet{\pazocal}{OMS}{zplm}{m}{n}
\def\dot{\mathchar"013A}
\newcommand{\hdot}{{\raise1pt\hbox to0.35em{\Huge $\dot$}}}
\begin{document}
\date{October 15, 2024}

\title[A complete presentation of $(\mathbb{C}\mbox{[X]},\circ)$]
{A complete presentation of $(\mathbb{C}\mbox{[X]},\circ)$}

\author[B.R.~Berceanu]{Barbu Rudolf Berceanu}
\email{barberceanu@yahoo.com}

\thanks{$^2$ This research was partially supported by Viste arquitectura}

\subjclass[2010]{Primary: 08A40, 08A50, 12E05; Secondary: 20M05.}

\keywords{canonical forms, composition of polynomials, Ritt theorem, solving ambiguities}

\begin{abstract}
We recall the fundamental theorem of J.F. Ritt, with a stress on the action of the affine 
group and canonical forms of complex polynomials. Then we give a complete presentation of 
the monoid $(\mathbb{C}\mbox{[X]},\circ)$. A list of decomposable polynomials is
given in degrees $\leq 12$.
\end{abstract}

\maketitle
\setcounter{tocdepth}{1}
\tableofcontents

\section{Introduction and statement of results}
\label{sec:intro}

Let $K$ be a field of characteristic 0 which is \textit{radically closed}: $K$ is the 
splitting field of polynomials $X^n-a$ (for any $a\in K$ and any $n\geq 2$).

The group of units in the monoid $(K[X],\circ)$ is 
$$ {\rm Aff}(K)=\{\alpha\in K[X]\mid\alpha(X)=aX+b,a\neq 0)\}. $$
Two polynomials $P(X),Q(X)$ in $K[X]$ are \textit{associate}, $P(X)\sim Q(X)$, if there are 
two invertible polynomials $\alpha,\beta\in {\rm Aff}(K)$ such that 
$$ \alpha(X)\circ P(X)\circ \beta^{-1}(X)=Q(X), $$
or, equivalently, if $P(X)$ and $Q(X)$ are $G$-equivalent under the natural action of the group 
$G={\rm Aff}(K)\times{\rm Aff}(K)$ on $K[X]$:
$$ (\alpha,\beta)\cdot P=\alpha\circ P\circ\beta^{-1}. $$
The $G$-orbit of $P(X)$ will be denoted $\widehat{P(X)}$. To find simple polynomials in a 
given $G$-orbit we introduce the following definition:
\begin{definition}\label{def11} 
A polynomial $P(X)$ of degree $d$ is a $\delta$-\textit{polynomial} if 
$$  \begin{array}{l}
     P(X)=X^d, \quad \mbox { or}     \\
     P(X)=X^d+dX^e+a_{e-1}X^{e-1}+\ldots+a_1X,  \mbox{ where }1\leq e\leq d-2.
\end{array} $$
\end{definition}
Let $L$ be an extension of $K$.
\begin{definition}\label{def12} 
A polynomial $P(X)$ in $K[X]$ is {\em decomposable} in $L[X]$ if there are two polynomials 
$Q,R\in L[X]$, ${\rm deg}(Q)=q$, ${\rm deg}(R)=r$ and $q,r\geq 2$, such that
$$ P(X)=Q(X)\circ R(X); $$
we say that $P(X)$ has a $(q,r)$ decomposition. Otherwise $P(X)$, a polynomial of degree 
$\geq 2$, is {\em indecomposable}. 
\end{definition}
In Section 2  we will introduce $\mu-$, $\nu-$ and $\varrho-$ polynomials, in order to obtain simple 
factors in a decomposition of a polynomial, $P(X)=Q(X)\circ R(X)$. In Proposition \ref{prop22} we show 
that every $G$-orbit contains finitely many $\delta$-polynomials. We describe an algorithm to decide if 
two polynomials are associate. In a $G$-orbit, not containing a monomial, the isotropy groups are finite,
see Proposition \ref{prop23}.

In Section 3, in Theorem \ref{thm33}, we show that one can find a 'normal' decomposition, see Definition
\ref{def32}, and its two components are in $K[X]$. As a consequence, an algorithm is given to decide
if a polynomial $P(X)$ is indecomposable or to find a decomposition of $P(X)$.

In Section 4 we give examples of indecomposable polynomials; for instance, we show that the cyclotomic
polynomial $\Phi_n(X)$ ($n\geq 3$) is indecomposable only if $n$ is square free or $n=4$, see 
Proposition \ref{prop41}. We also give conditions to have indecomposable Ritt polynomials, see 
Definition \ref{def48}.

In Section 5 we recall two fundamental results of J.~F.~Ritt, Theorem \ref{thm54} and Theorem \ref{thm56}.
The fundamental generators of the monoid $(\mathbb{C}\mbox{[X]},\circ)$, those contained in Ritt 
presentation, are ${\bf M}_p=X^p$, ${\bf T}_p(X)=\cos(p\arccos X)$ and Ritt polynomials 
${\bf R}_\lambda^{p,s}(X)$, ${\bf R}_\rho(X)^{p,s}$, where $p$ is a prime number, $1\leq s\leq p-1$.

In Section 6 we solve the 'ambiguities' of Ritt presentation and we find a complete presentation of
the monoid $(\mathbb{C}\mbox{[X]},\circ)$:
\begin{theorem}\label{thm13}
A complete presentation of the monoid $(\mathbb{C}[X],\circ)$ is given by 
$$ \left(
\begin{array}{cll} 
                      & \mid & {\bf M}_p\circ{\bf M}_q\succ{\bf M}_q\circ{\bf M}_p,\,(p>q
                                                                \mbox{ are prime numbers})             \\
\mbox{all}            & \mid & {\bf T}_p\circ{\bf T}_q\succ{\bf T}_q\circ{\bf T}_p,\,(p>q\geq 3
                                                                      \mbox{ are prime numbers})       \\
\mbox{indecomposable} & \mid & {\bf R}_\lambda^{p,s}\circ{\bf M}_p\succ{\bf M}_p\circ{\bf R}_\rho^{p,s}
                                                                      \,(p\mbox{ is prime },1\leq s<p) \\
\mbox{polynomials}    & \mid & {\bf R}_\lambda^{p,s}\circ{\bf M}_{q_1}\circ\ldots\circ{\bf M}_{q_s}
                              \circ{\bf  M}_p\succ{\bf M}_p\circ{\bf R}_\rho^{p,s}\circ{\bf M}_{q_1}\circ
                                                                           \ldots\circ{\bf M}_{q_s}    \\
                      & \mid & (p>q_s\geq q_{s-1}\geq\ldots\geq q_1\mbox{ are prime numbers})
\end{array} 
\right) $$
(the relations are up to a $G$-equivalence).
\end{theorem}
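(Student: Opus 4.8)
The plan is to derive the complete presentation from Ritt's second theorem (Theorem \ref{thm56}), which describes exactly when two decompositions of the same polynomial into indecomposables are related: the elementary moves are (i) swapping two prime monomials $\mathbf{M}_p\circ\mathbf{M}_q = \mathbf{M}_q\circ\mathbf{M}_p$, (ii) swapping two Chebyshev polynomials $\mathbf{T}_p\circ\mathbf{T}_q=\mathbf{T}_q\circ\mathbf{T}_p$, and (iii) the Ritt exchange $\mathbf{R}_\lambda^{p,s}\circ\mathbf{M}_p$ vs. $\mathbf{M}_p\circ\mathbf{R}_\rho^{p,s}$ (all up to $G$-equivalence and reading degrees off the monomials involved). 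Since by Theorem \ref{thm33} every decomposition can be taken in $K[X]=\mathbb{C}[X]$ with components in $\mathbb{C}[X]$, and by the first Ritt theorem (Theorem \ref{thm54}) any two complete decompositions have the same length and the same multiset of degrees, a presentation of the monoid is obtained by taking all indecomposable polynomials as generators and all such Ritt moves as relations. The only subtlety — the "ambiguities" promised in Section 6 — is that the third family of moves is \emph{not} confluent as stated: applying a Ritt exchange can create a configuration to which a monomial-swap, and then another Ritt exchange, applies, producing a relation that is a genuine consequence only after composing with a chain of $\mathbf{M}_{q_i}$'s. This is the origin of the fourth relation $\mathbf{R}_\lambda^{p,s}\circ\mathbf{M}_{q_1}\circ\cdots\circ\mathbf{M}_{q_s}\circ\mathbf{M}_p \succ \mathbf{M}_p\circ\mathbf{R}_\rho^{p,s}\circ\mathbf{M}_{q_1}\circ\cdots\circ\mathbf{M}_{q_s}$.

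First I would fix notation: let $F$ be the free monoid on the set $\mathcal{I}$ of ($G$-orbits of) indecomposable polynomials, let $\pi\colon F\to(\mathbb{C}[X],\circ)$ be the evaluation homomorphism (well-defined on $G$-orbits by choosing representatives and absorbing affine factors, using Proposition \ref{prop22} to make the choice canonical), and let $N$ be the congruence on $F$ generated by the four listed families of relations. Surjectivity of $\pi$ modulo $N$ is immediate: every polynomial of degree $\geq 2$ factors into indecomposables by repeated application of Definition \ref{def12}, and degree-$1$ polynomials are units, so $\pi$ is onto. The content is the reverse inclusion: if two words $w_1,w_2\in F$ have $\pi(w_1)\sim\pi(w_2)$, then $w_1\equiv w_2\pmod N$. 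I would prove this by induction on the common degree $d=\deg\pi(w_1)=\deg\pi(w_2)$ (equal by Theorem \ref{thm54}), reducing to the case where $w_1$ and $w_2$ are two complete decompositions of the \emph{same} polynomial $P$ and invoking Theorem \ref{thm56}: the two decompositions are connected by a finite chain of elementary Ritt moves, each of which is, up to $G$-equivalence, one of the first three listed relations — \emph{provided} we first normalize so that each such move is applied "at the top" of a common suffix. This normalization is exactly where the fourth relation enters.

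The main obstacle is therefore Step: showing the fourth relation family is a consequence of the first three together with \emph{associativity} in the free monoid, i.e., that it follows from the congruence generated by families (i)–(iii), so that adding it does not change $N$ — or, conversely, that it is forced and the first three alone do not suffice to connect all decompositions. Concretely, one must analyze a chain $\mathbf{R}_\lambda^{p,s}\circ\mathbf{M}_p\circ\mathbf{M}_{q_1}\cdots\to\cdots$: a Ritt move turns the leading $\mathbf{R}_\lambda^{p,s}\circ\mathbf{M}_p$ into $\mathbf{M}_p\circ\mathbf{R}_\rho^{p,s}$, then each $\mathbf{M}_{q_i}$ must be slid past using a monomial-swap, and when the block reaches the position where another prime $\mathbf{M}_p$ sits one needs the identity $\mathbf{R}_\rho^{p,s}\circ\mathbf{M}_p\sim\mathbf{M}_p\circ\mathbf{R}_\lambda^{p,s}$ (the "other half" of the Ritt pair) — but the intermediate $\mathbf{R}_\rho^{p,s}$ has already absorbed the $q_i$'s, so the naive two-move reduction fails and one is left with precisely the fourth relation as the net effect. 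I would verify, by direct computation with the explicit Ritt polynomials $\mathbf{R}_\lambda^{p,s},\mathbf{R}_\rho^{p,s}$ recalled in Section 5 (their defining functional equations with $\mathbf{M}_p$), that the left and right sides of the fourth relation indeed evaluate to $G$-equivalent polynomials, and then argue combinatorially that \emph{every} chain produced by Theorem \ref{thm56} decomposes into moves of types (i), (ii), (iii) and these composite type-(iv) moves — closing the induction. A final bookkeeping step checks that the side conditions on the primes ($p>q$, $p>q_s\geq\cdots\geq q_1$, $1\leq s<p$) are exactly those under which the corresponding Ritt configurations exist, so no relation is redundant and none is missing.
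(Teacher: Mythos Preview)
Your proposal proves the wrong statement. You are arguing that the listed relations generate the congruence kernel of $\pi\colon F\to(\mathbb{C}[X],\circ)$, i.e., that this is \emph{a} presentation. But Ritt's Theorems \ref{thm54} and \ref{thm56} already give that, using only the first three families; the fourth family is a consequence of the first three \emph{as equalities} in the monoid, so adding it does not change your congruence $N$ at all. What Theorem \ref{thm13} actually asserts is that the system is a \emph{complete} presentation in Bergman's sense (Section 6, first paragraph): a terminating, confluent rewriting system in which every word has a unique normal form. Your induction on degree and appeal to Theorem \ref{thm56} do not touch this.

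The paper's argument is a Knuth--Bendix/diamond-lemma analysis. One orients each Ritt relation $W_1\succ W_2$ by degree and then checks the overlap ambiguities. The monomial and Chebyshev overlaps (${\bf M}_p\circ{\bf M}_q\circ{\bf M}_r$ and ${\bf T}_p\circ{\bf T}_q\circ{\bf T}_r$) are solvable. The critical overlap is ${\bf R}_\lambda^{p,s}\circ{\bf M}_p\circ{\bf M}_q$ with $p>q$: reducing the left pair gives ${\bf M}_p\circ{\bf R}_\rho^{p,s}\circ{\bf M}_q$, while reducing the right pair gives ${\bf R}_\lambda^{p,s}\circ{\bf M}_q\circ{\bf M}_p$, and \emph{both are irreducible} under the first three families---there is no rule that slides ${\bf M}_q$ past ${\bf R}_\rho^{p,s}$, contrary to what your sketch seems to assume. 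Hence the ambiguity is not solvable and one must adjoin the new rule ${\bf R}_\lambda^{p,s}\circ{\bf M}_q\circ{\bf M}_p\succ{\bf M}_p\circ{\bf R}_\rho^{p,s}\circ{\bf M}_q$. This new rule creates a new overlap with ${\bf M}_p\circ{\bf M}_r$, which again fails, forcing a length-three chain of ${\bf M}_{q_i}$'s, and so on; the process stabilizes at the fourth family. The actual proof then checks that with the full fourth family in place every remaining overlap \emph{is} solvable. Your proposal contains none of this overlap analysis, and your description of the ``main obstacle'' (sliding ${\bf M}_{q_i}$ past ${\bf R}_\rho^{p,s}$, meeting ``another ${\bf M}_p$'') does not match what actually happens.
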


The Appendix contains all decomposable $\delta$-polynomials of degree $\leq 12$.


\section{Canonical forms of polynomials}

We will see that, for nice factors in the decomposition of a polynomial, we need the following 
(not so simple) polynomials:
\begin{definition}\label{def21} 
A non constant polynomial $P(X)$ in $K[X]$ will be called:

a) an $\mu-$\textit{polynomial} if it is monic;

b) an $\nu-$\textit{polynomial} if 0 is one of its roots;

c) an $\varrho-$\textit{polynomial} if it is reduced, that is if
$$  P(X)=\begin{cases}
  a_dX^d, \mbox{ where }a_d\neq 0,\quad \mbox { or}                                  \\
  a_dX^d+a_eX^e+\ldots+a_1X,  \mbox { where }a_d,a_e\neq 0,
     \mbox { and }1\leq e\leq d-2.
\end{cases} $$
If $P(X)$ is an $\varrho-$polynomial ($P(X)\neq a_dX^d$), $e$ is its \textit{second degree},  
$(d,e)$ is its \textit{bi-degree} and the difference $\Delta=d-e$ is the \textit{descent} of $P(X)$. 
The \textit{support} of an $\varrho-$polynomial $P(X)$ is the decreasing sequence 
$(d,e,\ldots,e_i,\ldots)$ corresponding to monomials $X^{e_i}$ in $P(X)$ with non-zero coefficients. 
In the case $P(X)=a_dX^d$ ($a_d\neq 0$), the descent is $d$ and the support is $(d)$. By convention,
the support of a constant polynomial $P(X)=a_0$ is $(0)$ (even in the case $a_0=0$). 
\end{definition}
\begin{example}
$2X^5-3X^2+X$ is an $\varrho-$polynomial with bi-degree $(5,2)$ and support $(5,2,1)$; $X^2+1$ is an 
$\mu-$polynomial, but it is not an $\varrho-$polynomial.
\end{example} 

\begin{prop}\label{prop22}
a) In one $G$-orbit $\mathcal{O}$, all polynomials have the same degree and all $\varrho-$polynomials 
have the same bi-degree and the same support.

b) Every $G$-orbit $\mathcal{O}$ contains a $\delta-$polynomial $Q(X)$. If a $\delta-$polynomial 
$Q(X)$ in $\mathcal{O}$ has bi-degree $(d,e)$, then $\mathcal{O}$ contains at most $\Delta=d-e$ 
$\delta-$polynomials.    
\end{prop}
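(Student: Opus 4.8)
The plan is to reduce the action of $G = \mathrm{Aff}(K)\times\mathrm{Aff}(K)$ to a sequence of elementary normalizations, tracking exactly which freedom remains at each step. First I would prove part (a). Given $P$ of degree $d$ and $(\alpha,\beta)\in G$, the polynomial $\alpha\circ P\circ\beta^{-1}$ again has degree $d$ since $\alpha$ and $\beta$ are linear and invertible; that settles the degree claim. For the claim about $\varrho$-polynomials: if $P$ and $\widetilde P$ are both $\varrho$-polynomials in the same orbit, write $\widetilde P=\alpha\circ P\circ\beta^{-1}$ with $\alpha(X)=aX+b$, $\beta^{-1}(X)=cX+\gamma$. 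Expanding, the condition that $\widetilde P$ be reduced — i.e.\ have no constant term and no degree-$(d-1)$ term — forces first $\gamma=0$ (from the constant term, using that $P$ itself has zero constant term and no degree-$(d-1)$ term) and then $b=0$; what is left is $\widetilde P(X)=a\,P(cX)$, a pure rescaling of the variable and of all coefficients. Such a rescaling visibly preserves the bi-degree $(d,e)$ and the support, because it multiplies the coefficient of $X^{e_i}$ by $a c^{e_i}$, which is nonzero exactly when the original coefficient is.

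Next I would prove the existence part of (b): every orbit contains a $\delta$-polynomial. Start from any $P$ of degree $d$. Applying $\beta$ on the right we may complete the leading term and kill the $X^{d-1}$ term (the classical Tschirnhaus move $X\mapsto X - a_{d-1}/(d a_d)$), and then applying the scaling part of $\alpha$ and $\beta$ we may normalize the leading coefficient to $1$; applying the translation part of $\alpha$ we kill the constant term. At this stage $P$ is an $\varrho$-polynomial (or the monomial $X^d$, in which case we are done). If $P\ne X^d$, let $e$ be its second degree. The residual freedom, by the part-(a) analysis, is exactly the scalings $X\mapsto cX$ composed with an overall scalar; using the single parameter $c$ (or rather $c^{\Delta}$ up to an overall scalar, where $\Delta=d-e$) I can normalize the coefficient of $X^e$ to equal $d$, giving a $\delta$-polynomial. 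This last normalization requires extracting a $\Delta$-th root of a ratio of nonzero elements of $K$, which is where radical-closedness of $K$ is used.

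Finally, the counting bound. After fixing the $\delta$-polynomial normalization, the only transformations of $G$ that send one $\delta$-polynomial to another are the scalings $X\mapsto \zeta X$ with $\zeta$ chosen so that the leading coefficient stays $1$ and the coefficient of $X^e$ stays $d$; writing out these two equations, $\zeta^d\cdot(\text{scalar})=1$ and $\zeta^e\cdot(\text{scalar})=1$, and eliminating the scalar gives $\zeta^{d-e}=1$, i.e.\ $\zeta$ is a $\Delta$-th root of unity. Distinct such $\zeta$ can produce distinct $\delta$-polynomials (they permute the lower coefficients $a_i$ by $a_i\mapsto \zeta^{\,i-?}a_i$), so the orbit of a given $\delta$-polynomial under this residual finite group has size dividing $\Delta$, whence there are at most $\Delta$ $\delta$-polynomials in $\mathcal O$. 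The main obstacle I anticipate is purely bookkeeping: being careful that the normalization steps are performed in an order in which each step does not destroy the constraints established by the previous ones (e.g.\ that killing the constant term of $\alpha$ does not re-introduce an $X^{d-1}$ term), and making the elimination in the last step genuinely yield $\zeta^{\Delta}=1$ rather than some weaker condition — both are straightforward once one writes $\alpha$ and $\beta$ in coordinates, but they are the places where a sloppy argument would go wrong.
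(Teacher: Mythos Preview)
Your approach is essentially identical to the paper's: Tschirnhaus to kill $X^{d-1}$, scale and translate on the left to make the polynomial $\mu\varrho$, then extract a $\Delta$-th root (using radical closedness) to normalize the $X^e$ coefficient to $d$; for the bound, the residual freedom is $(u^dX,uX)$ with $u^{\Delta}=1$. One small bookkeeping slip in part (a): it is the $X^{d-1}$ coefficient of $\alpha\circ P\circ\beta^{-1}$ (namely $a c_d\, d\, c^{d-1}\gamma$) that forces $\gamma=0$, and only then does the constant term force $b=0$ --- not the other way around, since the constant-term equation $aP(\gamma)+b=0$ alone does not determine $\gamma$.
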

\begin{proof}
a) Take $(\alpha,\beta)\in G$, $\alpha(X)=aX+b$, $\beta^{-1}(X)=rX+s$ and 
$$ P(X)=c_dX^d+c_{d-1}X^{d-1}+\ldots+c_0, \quad a,r,c_d\neq 0. $$
It is obvious that $\alpha\circ P$ and $P\circ \beta^{-1}$ have degree $d$. 

If $P(X)\sim Q(X)$ are $\varrho-$polynomials of degree $d$, then the coefficient of $X^{d-1}$ in
$Q=\alpha\circ P\circ\beta^{-1}$ is $ac_ddr^{d-1}s$, hence $s=0$ (in the case $d=0$, $Q$ is a
constant, too, possibly $0$). The coefficients of $X^i$ ($i\geq 1$) in $P$ and $Q$ are $c_i$ and 
$ac_ir^i$, therefore $P$ and $Q$ have the same the same support and bi-degree.

b) Only in this part we need a radically closed field $K$. Choose a polynomial $P(X)$ in $\mathcal{O}$. 
Using a Tschirnhaus transformation
$$ P(X)\mapsto P(X)\circ \left(X-\dfrac{c_{d-1}}{dc_d}\right)=R(X) $$
and $\alpha(X)=c_d^{-1}X-c_d^{-1}R(0)$ we obtain an $\mu\varrho-$polynomial
$$  \begin{array}{lll}
  S(X) & = & \left(\dfrac{1}{c_d}X-\dfrac{1}{c_d}R(0)\right)\circ P(X)\circ\left(X-\dfrac{c_{d-1}}{dc_d}\right) \\
       & = & \begin{cases}
       X^d+f_eX^e+f_{e-1}X^{e-1}+\ldots+f_1X\,(\mbox{where }f_e\neq 0,\,1\leq e\leq d-2),     \\
       X^d.
\end{cases}  
\end{array} $$
If $S(X)=X^d$, this is the unique $\delta$-polynomial in its orbit. If $f_e\neq 0$ and $1\leq e\leq d-2$, 
take $v$ a root of $X^{d-e}-\frac{d}{f_e}$, $\eta(X)=v^dX$, $\theta(X)=vX$, and we obtain the 
$\delta-$polynomial
$$ Q(X)=\eta(X)\circ P(X)\circ \theta^{-1}(X)=X^d+dX^e+g_{e-1}X^{e-1}+\ldots+g_1X. $$
The other $\delta-$polynomials associate with $Q(X)$ can be obtained only by acting with 
$(\lambda(X)=u^dX,\mu(X)=uX)\in G$, where $u$ is a root of unity of order $\Delta=d-e$.
\end{proof}
\begin{example}
Take the  $\delta-$polynomials $Q_{\alpha}=X^9+9X^5+\alpha X^4$, $R_{\theta}=X^9+9X^5+\theta X^3$  and 
$S_{\sigma}=X^9+9X^5+\sigma X$ in $\mathbb{C}\mbox{[X]}$. Then 
$$ \begin{array}{lll}
Q_{\alpha}\sim Q_{\beta} & \mbox{ if and only if } & \alpha=\pm\beta\mbox{ or }\alpha=\pm i\beta,  \\
R_{\theta}\sim R_{\eta}  & \mbox{ if and only if } & \theta=\pm\eta,                               \\
S_{\sigma}\sim S_{\tau}  & \mbox{ if and only if } & \sigma=\tau.        
\end{array} $$
\end{example} 
\begin{remark}
The proof of Proposition \ref{prop22} gives an algorithm to decide if two polynomials $P_1(X)$ and $P_2(X)$ 
are associate: we compute two $\delta-$polynomials $Q_1(X)$ and $Q_2(X)$ corresponding to $P_1(X)$ and 
$P_2(X)$. If $Q_1$ and $Q_2$ have the same support $(d,\delta,e_1,e_2,\ldots,e_s)$, we compute the ratios 
$u_j=a_{e_j}b^{-1}_{e_j}$ (here $a_{\ast}$ and $b_{\ast}$ are the coefficients of $Q_1$ and $Q_2$ 
respectively). Then $P_1$ and $P_2$ are associate if and only if there is a root of unity $u$ of order 
$\Delta=d-e$ such that, for all $j\in\{1,2,\ldots,s\}$, we have $u_j=u^{d-e_j}$.
\end{remark}
\begin{example}
There are unique $G$-orbits for polynomials of degree 0, 1 and 2:
$$ \mathcal{O}^0=\widehat{1},\,\mathcal{O}^1=\widehat{X} \mbox{ and }\mathcal{O}^2=\widehat{X^2}. $$
In degree 3 there are only two $G$-orbits:
$$ \mathcal{O}^3=\widehat{X^3} \mbox{ and }\mathcal{O}^{3,1}=\widehat{X^3+3X}. $$
For higher degrees there are infinitely many $G$-orbits. For instance, in degree 4 these are:
$$ \mathcal{O}^4=\widehat{X^4},\,\mathcal{O}^{4,1}=\widehat{X^4+4X}\mbox{ and }
\mathcal{O}^{4,2}_{\alpha}=\widehat{X^4+4X^2+\alpha X}. $$ 
We have $\mathcal{O}^{4,2}_{\alpha}=\mathcal{O}^{4,2}_{\beta}$ if and only if $\alpha=\pm\beta$.
\end{example} 
\begin{prop}\label{prop23}
The isotropy groups of the action of $G$ on $K[X]$ are given by
\begin{center}
\begin{tabular}{|c|c|c|c|c|}
\hline
$P(X)$     & $1$ & $X$            & $X^d\,(d\geq 2)$ & $ X^d+dX^e+\ldots+a_{e_i}X^{e_i}+\ldots $ \\
\hline
$G_{P(X)}$ & $K^{\ast}\times{\rm Aff}(K)$ 
                & ${\rm Aff}(K)$ & $K^{\ast}$       & $ \mathbb{Z}_n \mbox{ where } n=
                                                         {\rm gcd}(\Delta,\ldots,d-e_i,\ldots) $ \\
\hline
\end{tabular}
\end{center}
\end{prop}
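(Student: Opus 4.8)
The plan is to compute the stabilizer $G_{P}=\{(\alpha,\beta)\in G:\alpha\circ P\circ\beta^{-1}=P\}$ by a direct coefficient comparison for each representative in the table, using two standard reductions. First, stabilizers of points in the same $G$-orbit are conjugate in $G$, hence isomorphic. Second, by Proposition \ref{prop22}(b) every orbit is represented by a constant, by $X$, by a monomial $X^d$ with $d\geq 2$, or by a $\delta$-polynomial with genuine lower-order terms — so it suffices to treat these four shapes.

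The two easy cases are immediate. For $P=1$ (and hence for every constant, since all constants are $G$-equivalent), $\alpha\circ 1\circ\beta^{-1}=\alpha(1)$ does not involve $\beta$, so the condition is $\alpha(1)=1$; writing $\alpha(X)=aX+b$ this says $b=1-a$, while $\beta$ is unconstrained, giving $G_1=\{aX+1-a:a\in K^\ast\}\times{\rm Aff}(K)\cong K^\ast\times{\rm Aff}(K)$. For $P=X$ the condition is $\alpha\circ\beta^{-1}=X$, i.e. $\alpha=\beta$, so $G_X$ is the diagonal subgroup $\cong{\rm Aff}(K)$.

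For the two remaining cases I would write $\alpha(X)=aX+b$ and $\beta^{-1}(X)=rX+s$ with $a,r\in K^\ast$, expand $\alpha(P(rX+s))$, and match coefficients with $P$ from the top down. Since $\ch K=0$ and every monomial below $X^d$ occurring in $P$ has degree $\leq d-2$, the coefficient of $X^{d-1}$ in $\alpha(P(rX+s))$ is $a\,d\,r^{d-1}s$, forcing $s=0$; as $P$ has zero constant term, matching constant terms then forces $b=0$, so $\alpha(X)=aX$ and we are left comparing $a\,P(rX)$ with $P(X)$. For $P=X^d$ this is the single relation $ar^d=1$, with $r\in K^\ast$ free, so $r\mapsto(r^{-d}X,\,r^{-1}X)$ gives $G_{X^d}\cong K^\ast$. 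For a $\delta$-polynomial $P=X^d+dX^e+\cdots+a_{e_i}X^{e_i}+\cdots$ with support $(d,e,\ldots,e_i,\ldots)$, matching the coefficient of each monomial $X^{e_i}$ actually present (so $a_{e_i}\neq 0$) gives $ar^{e_i}=1$; together with $ar^d=1$ this is equivalent to $r^{\,d-e_i}=1$ for every $e_i$ in the support, i.e. to $r$ being a root of unity whose order divides $n={\rm gcd}(\Delta,\ldots,d-e_i,\ldots)$, with $a=r^{-d}$ then determined and automatically compatible with every constraint. Hence $r\mapsto(r^{-d}X,\,r^{-1}X)$ identifies $G_P$ with the cyclic group of $n$-th roots of unity in $K$, i.e. $G_P\cong\mathbb{Z}_n$.

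I do not expect a genuine obstacle: the whole argument is bookkeeping. The one place deserving care is the $\delta$-polynomial case, where one must check that (i) the $X^{d-1}$-coefficient argument applies, which needs exactly the hypothesis $e\leq d-2$ built into the canonical form; (ii) the constraints coming from all monomials of the support collapse precisely to the gcd condition on the descents $d-e_i$; and (iii) the parametrization $r\mapsto(r^{-d}X,r^{-1}X)$ is a group homomorphism, hence a genuine isomorphism onto the stabilizer. It is also worth noting that the answer depends only on which exponents occur in $P$, not on the values of the coefficients $a_{e_i}$ — consistently with Proposition \ref{prop22}(a), according to which the support is a $G$-invariant.
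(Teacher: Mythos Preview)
Your proof is correct and follows essentially the same approach as the paper: reduce to the canonical representatives via Proposition~\ref{prop22}(b), then compute the stabilizer in each case by direct coefficient comparison, using the vanishing $X^{d-1}$-coefficient to force $s=0$ and the vanishing constant term to force $b=0$, and finally reading off the root-of-unity condition on the scaling parameter. The only cosmetic difference is that the paper parametrizes the stabilizer of a $\delta$-polynomial as $\{(u^dX,uX):u^n=1\}$ while you use $r\mapsto(r^{-d}X,r^{-1}X)$; these are the same group under $u=r^{-1}$.
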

\begin{proof}
The isotropy groups of the polynomial $1$ is isomorphic with $K^{\ast}\times{\rm Aff}(K)$: if 
$\alpha\circ 1=1\circ \beta$, then $\alpha(X)=aX+(1-a)$ and there is no restriction on $\beta(X)$. 
For higher degrees it is enough to compute isotropy groups 
$G_{P(X)}$ for $P(X)$ a $\delta-$polynomial. In degree 1, from $(aX+b)\circ X=X\circ(cX+d)$, we obtain the group 
$$ \{(aX+b,aX+b)\mid a\neq 0\}\cong {\rm Aff}(K). $$ 
If $d\geq 2$, from $(aX+b)\circ X^d=X^d\circ(cX+e)$ we get $e=b=0$ and the group
$$ \{(a^dX,aX)\mid a\neq 0\}\cong K^{\ast}. $$ 
In the proof of Proposition \ref{prop22} we saw that $X^d+dX^e$
is invariant under the transformations $(u^dX,uX)$, where $u$ is a root of unity of order $\Delta$; these leave 
invariant the monomial $a_{e_j}X^{e_j}$ ($a_{e_j}\neq 0$) if and only if $u^{d-e_j}=1$, hence $u$ is a root 
of unity of order $n={\rm gcd}(\Delta,\ldots,d-e_j,\ldots) $ and the isotropy group is
$$ \{(u^dX,uX)\mid u^n=1\}\cong \mathbb{Z}_n . $$  
\end{proof}


\section{Decomposition of polynomials}

We begin to show that decomposability is compatible with the $G$-action and it is independent of 
the field extension.
\begin{prop}\label{prop31}
If $P(X)$ and $Q(X)$ are associate, then $P(X)$ have an $(a,b)$ decomposition if and only if $Q(X)$ 
have an $(a,b)$ decomposition.
\end{prop}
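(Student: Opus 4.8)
The plan is to exploit the fact that being associate is precisely being in the same $G$-orbit, hence an equivalence relation, so that by symmetry it suffices to prove a single implication. Concretely, I would start from the hypothesis $P(X)\sim Q(X)$, write $Q=\alpha\circ P\circ\beta^{-1}$ for suitable $\alpha,\beta\in\mathrm{Aff}(K)$, and assume $P$ admits an $(a,b)$ decomposition $P=Q_1\circ R_1$ with $\deg Q_1=a$, $\deg R_1=b$ and $a,b\geq 2$.

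The key step is then just to insert the affine factors on the correct side and regroup using associativity of $\circ$: $Q=\alpha\circ(Q_1\circ R_1)\circ\beta^{-1}=(\alpha\circ Q_1)\circ(R_1\circ\beta^{-1})$. Setting $Q_2=\alpha\circ Q_1$ and $R_2=R_1\circ\beta^{-1}$, one checks that composing on either side with a degree-one polynomial leaves the degree unchanged (this is immediate from the leading term), so $\deg Q_2=a$ and $\deg R_2=b$; thus $Q=Q_2\circ R_2$ is an $(a,b)$ decomposition of $Q$. The reverse implication is obtained verbatim from $P=\alpha^{-1}\circ Q\circ\beta$, or simply by invoking symmetry of $\sim$.

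I do not expect any genuine obstacle here: the proof uses nothing beyond associativity of composition and the fact that $\mathrm{Aff}(K)$ consists of invertible polynomials of degree one. The one point worth flagging explicitly, in view of the sentence preceding the proposition, is the interaction with field extensions: since $\alpha,\beta\in\mathrm{Aff}(K)\subseteq\mathrm{Aff}(L)$ for every extension $L/K$, the constructed components $Q_2,R_2$ lie in exactly the same ring $L[X]$ as $Q_1,R_1$. Hence the equivalence of "$P$ has an $(a,b)$ decomposition" and "$Q$ has an $(a,b)$ decomposition" holds whether decomposability is understood over $K$ or over any prescribed extension $L$, which is the form in which the result will be used later.
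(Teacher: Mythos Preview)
Your proof is correct and follows essentially the same approach as the paper: write $Q=\alpha\circ P\circ\beta^{-1}$, substitute a decomposition $P=A\circ B$, regroup as $(\alpha\circ A)\circ(B\circ\beta^{-1})$, and note that composing with a degree-one polynomial preserves degrees. Your additional remark about the field of definition of the factors is a useful observation that the paper leaves implicit.
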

\begin{proof}
If $\alpha\circ P\circ \beta^{-1}=Q$ and $P=A\circ B$, then $Q=(\alpha\circ A)\circ (B\circ \beta^{-1})$ 
and the degrees of $A(X)$ and $B(X)$ are those of $\alpha(X)\circ A(X)$ and $B(X)\circ \beta^{-1}(X)$
respectively.
\end{proof}
\begin{definition}\label{def32} 
A $(q,r)$ decomposition of $P(X)$, $P=Q\circ R$, ($P(X)\in K[X]$, $Q(X),R(X)\in L[X]$) is a 
\textit{normal} $(q,r)$ \textit{decomposition} if $R(X)$ is a $\mu\nu-$polynomial.
\end{definition}
\begin{theorem}\label{thm33}
a) Suppose that $P(X)$, a polynomial of degree $n$ in $K[X]$, has a $(q,r)$ decomposition in $L[X]$.
Then $P(X)$ has a unique normal $(q,r)$ decomposition $P(X)=Q(X)\circ R(X)$. Moreover, $Q(X)$ and 
$R(X)$ are polynomials in $K[X]$.

b) If  $Q(X)\circ R(X)$ is the normal $(q,r)$ decomposition of $P(X)\in K[X]$, then

$\quad\quad b_1)$ $P(X)$ is a $\mu-$ (or a $\nu-$)polynomial if and only if $Q(X)$ is a $\mu-$ 
(or a $\nu-$)polynomial;

$\quad\quad b_2)$ $P(X)$ is a $\varrho-$polynomial if and only if $R(X)$ is a $\varrho-$polynomial. In
this case, if $R(X)\neq X^r$, then $P(X)$ and $R(X)$ have the same descent $\Delta$.   
\end{theorem}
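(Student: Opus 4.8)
\emph{Proof proposal.} For part~(a) I would separate \emph{existence}, \emph{uniqueness}, and \emph{descent to $K$}. Starting from an arbitrary $(q,r)$-decomposition $P=A\circ B$ in $L[X]$, let $b$ be the leading coefficient of $B$ and put $\beta(X)=b^{-1}X-b^{-1}B(0)\in{\rm Aff}(L)$; then $R:=\beta\circ B$ is monic with $R(0)=0$, i.e.\ a $\mu\nu$-polynomial, and $Q:=A\circ\beta^{-1}$ satisfies $Q\circ R=P$, so a normal $(q,r)$-decomposition exists over $L$. For uniqueness, suppose $P=Q_1\circ R_1=Q_2\circ R_2$ are two normal $(q,r)$-decompositions; dividing everything by the leading coefficient of $P$ (legitimate since each $R_i$ is monic) we may take $P,Q_1,Q_2$ monic. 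If $R_1\neq R_2$, set $m=\deg(R_1-R_2)\le r-1$ and write
\[
0=Q_1(R_1)-Q_2(R_2)=\bigl(Q_1(R_1)-Q_1(R_2)\bigr)+\bigl(Q_1-Q_2\bigr)(R_2).
\]
The first summand is $(R_1-R_2)$ times the difference quotient $(Q_1(R_1)-Q_1(R_2))/(R_1-R_2)$, whose leading term is $qX^{(q-1)r}$ in characteristic $0$, so it has degree exactly $m+(q-1)r$; the second summand has degree $\le(q-1)r$ since $\deg(Q_1-Q_2)\le q-1$. These cannot cancel unless $m\le 0$, in which case $R_1-R_2$ is a constant vanishing at $0$, hence $R_1=R_2$; then $(Q_1-Q_2)\circ R_1=0$ with $R_1$ non-constant forces $Q_1=Q_2$.

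To get $Q,R\in K[X]$ I would feed this uniqueness into an invariance argument. Fix a $K$-embedding $\sigma$ of $L$ into a sufficiently large algebraically closed field; applying $\sigma$ coefficientwise to $P=Q\circ R$ and using $\sigma(P)=P$ (as $P\in K[X]$) gives $P=\sigma(Q)\circ\sigma(R)$, which is again a normal $(q,r)$-decomposition since $\sigma(R)$ is still monic with vanishing constant term. By the uniqueness just proved, $\sigma(Q)=Q$ and $\sigma(R)=R$; since this holds for every such $\sigma$, all coefficients of $Q$ and $R$ are fixed by every $K$-embedding of $L$, hence lie in $K$.

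Part~(b) is then a comparison of a few coefficients in $P=Q\circ R$. Write $R(X)=X^r+\rho_{r-1}X^{r-1}+\dots+\rho_1X$ and $Q(Y)=c_qY^q+\dots+c_0$, with $n=qr$. Since $R$ is monic, the coefficient of $X^n$ in $P$ is $c_q$, so $P$ is a $\mu$-polynomial iff $Q$ is; and $P(0)=Q(R(0))=Q(0)=c_0$, so $P$ is a $\nu$-polynomial iff $Q$ is — this is $(b_1)$. For $(b_2)$, the term $c_{q-1}R^{q-1}$ has degree $(q-1)r\le n-2$, so the coefficient of $X^{n-1}$ in $P$ equals $c_q\cdot[X^{n-1}]R^{q}=q\,c_q\,\rho_{r-1}$, which vanishes iff $\rho_{r-1}=0$ (characteristic $0$). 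As $R(0)=0$ holds in every normal decomposition, ``$R$ is a $\varrho$-polynomial'' is exactly the condition $\rho_{r-1}=0$, and combined with $(b_1)$ this reads off the asserted equivalence with ``$P$ is a $\varrho$-polynomial''. Finally, if $R\neq X^r$ is a $\varrho$-polynomial with second degree $e$ (so $1\le e\le r-2$, $\rho_e\neq0$), a shortfall-of-degree count in $R^{q}$ — only one factor can contribute an exponent below $r$, and it must contribute $e$ — shows the largest exponent $<n$ occurring in $P$ is $(q-1)r+e=n-(r-e)$, with coefficient $q\,c_q\,\rho_e\neq0$; hence the descent of $P$ is $r-e=\Delta$, the descent of $R$.

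The step I expect to be delicate is the descent to $K$ in part~(a): the explicit normalization only produces the factors over $L$, and it is the uniqueness statement that forces Galois/embedding invariance. One has to be a little careful that the invariance argument reaches every coefficient — for an algebraic element of $L$ over $K$ one moves it by a conjugate root of its minimal polynomial, and for a transcendental one by extending a nontrivial $K$-automorphism of the relevant rational function field into a large enough algebraically closed field. Everything else in both parts is routine leading-term bookkeeping.
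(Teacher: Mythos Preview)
Your argument is correct, and for part~(b) it runs essentially the same way as the paper's: both read off $a_n=c_q$, $a_0=c_0$, $a_{n-1}=q\,c_q\,\rho_{r-1}$, and then track which is the first sub-leading coefficient of $P$ that survives when $R$ is a $\varrho$-polynomial with second degree $e$.

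For part~(a), however, your route is genuinely different from the paper's. The paper does \emph{not} separate uniqueness from descent to $K$: it writes out the relations expressing $a_n,a_{n-1},\dots,a_{n-r+1}$ in terms of $b_q$ and $c_{r-1},\dots,c_1$, observes that these solve recursively and uniquely for the $c_j$ as polynomial expressions (with integer coefficients) in the $a_i$ and $a_n^{-1}$, and then does the same for the $b_j$ using the coefficients $a_{n-jr}$. This single explicit computation yields uniqueness and $K$-rationality in one stroke, with no field theory, and is moreover constructive --- the paper immediately exploits it to give a decomposition algorithm in the Remark following the theorem. Your proof trades this bookkeeping for a cleaner, coordinate-free uniqueness argument (the degree comparison between $Q_1(R_1)-Q_1(R_2)$ and $(Q_1-Q_2)(R_2)$), at the price of then needing the embedding-invariance step for descent. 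You are right to flag that step as the delicate one: it genuinely requires the extension theorem for $K$-embeddings into a large algebraically closed field, and the care you take with the transcendental case is necessary. Both approaches work; yours is more conceptual, the paper's more computational and directly algorithmic.
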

\begin{proof}
a) Choose a $(q,r)$ decomposition $P(X)=S(X)\circ T(X)$ in $L[X]$, where 
$$ T(X)=e_rX^r+e_{r-1}X^{r-1}+\ldots +e_0\in L[X],\quad (e_r\neq 0,2\leq r\leq n-2). $$
Using $\alpha(X)=e_r^{-1}X-e_r^{-1}e_0$, we find a normal $(q,r)$ decomposition 
$$ P(X)=[S(X)\circ \alpha^{-1}(X)]\circ [\alpha(X)\circ T(X)]=Q(X)\circ R(X) $$
(here ${\rm deg}(Q)={\rm deg}(S)$, ${\rm deg}(R)={\rm deg}(T)$, and $R(X)$ is an $\mu\nu-$polynomial).

Looking at the first coefficients in the equality $P(X)=Q(X)\circ R(X)$:
$$\begin{array}{ll}
P(X)=a_nX^n+a_{n-1}X^{n-1}+\ldots +a_{n-r+1}X^{n-r+1}+\ldots +a_0, &  (a_n\neq 0,a_i\in K),  \\
Q(X)=b_qX^q+b_{q-1}X^{q-1}+\ldots +b_0,                          &  (b_q\neq 0,b_j\in L),    \\
R(X)=X^r+c_{r-1}X^{r-1}+\ldots +c_1X,                        &  (c_k\in L,n=qr),
\end{array}$$
we find that 
$$ \begin{array}{lll}
a_n           & =      & b_q                                                       \\
a_{n-1}       & =      & b_q{q\choose 1}c_{r-1}                                    \\
a_{n-2}       & =      & b_q[{q\choose 1}c_{r-2}+{q\choose 2}c_{r-1}^2]=
                               b_q[{q\choose 1}c_{r-2}+p_{n-2}(c_{r-1})]           \\ 
\ldots \ldots & \ldots & \ldots \ldots \ldots \ldots \ldots  \ldots \ldots \ldots  \\
a_{n-r+1}    & =      & b_q[{q\choose 1}c_1+p_{n-r+1}(c_{r-1},\ldots ,c_2)]
\end{array} $$
(where $p_{\ast}(..,c_j,..)$ are polynomials in $\{c_j\}$ with coefficients in
$\mathbb{Z}$), therefore $b_q\in K$ and $R(X)$, uniquely determined by $P(X)$, have the coefficients in $K$. 
For later use, let us remark that $p_{\ast}(..,c_j,..)$ is 0 if all $c_j$ are 0. The 
coefficients of $X^{n-jr}$ ($j=1,\ldots,q$) will give
$$\begin{array}{lll}
a_{n-r}      & =    & b_{q-1}+b_qp_{n-r,q}(c_{r-1},\ldots ,c_1)                                            \\
a_{n-2r}     & =    & b_{q-2}+b_{q-1}p_{n-2r,q-1}(c_{r-1},\ldots ,c_1)+b_qp_{n-2r,q}(c_{r-1},\ldots ,c_1)  \\
\ldots\ldots &\ldots& \ldots \ldots \ldots \ldots \ldots  \ldots \ldots \ldots  \ldots \ldots \ldots\ldots \\
a_r          & = & b_1+\sum_{j=2}^qb_jp_{r,j}(c_{r-1},\ldots ,c_1)
\end{array}$$
and finally $a_0=b_0$. This shows that $Q(X)$ is uniquely determined by $P(X)$ and by the pair $(q,r)$,
and also that $Q(X)\in K[X]$.

b) $P(X)$ and $Q(X)$ are at the same time $\mu-$polynomials because $a_n=b_q$. They are simultaneously
$\nu-$polynomials because $a_0=b_0$. The last equivalence comes from the relation $a_{n-1}=b_qqc_{r-1}$.
If the descent of $R(X)(\neq X^r)$ is $r-e$, then
$$ a_{n-1}=qc_{r-1}=0, a_{n-2}=qc_{r-2}=0,\ldots ,a_{n-r+e+1}=qc_{e+1}=0,a_{n-r+e}=qc_e\neq 0, $$
hence $\Delta_P=n-(n-r+e)=r-e=\Delta_R$. If $R(X)=X^r$, then $\Delta_P=\Delta_R$ only when $Q(X)$ is
not a $\varrho-$polynomial.
\end{proof}
In the rest of the paper we suppose that 
polynomials are $\delta-$polynomials, whenever this is possible.
\begin{corollary}\label{cor34}
($\delta$ decomposition) If a $\delta-$polynomial $P(X)$ has the normal $(q,r)$ decomposition 
$P(X)=Q(X)\circ R(X)$, then $R(X)$ is also a $\delta-$polynomial. In the special case $R(X)=X^r$,
$Q(X)$ is a $\varrho-$polynomial if and only if $r\neq\Delta_P$; in this case $\Delta_P=r\Delta_Q$.
\end{corollary}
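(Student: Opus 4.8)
The plan is to read everything off Theorem~\ref{thm33} together with a look at the top few coefficients of $P=Q\circ R$. Write the given normal decomposition as $Q(X)=b_qX^q+b_{q-1}X^{q-1}+\ldots+b_0$ and $R(X)=X^r+c_{r-1}X^{r-1}+\ldots+c_1X$; by Theorem~\ref{thm33}(a) both lie in $K[X]$, and $R$ is a $\mu\nu-$polynomial by construction. Since a $\delta-$polynomial is in particular a $\mu-$, a $\nu-$ and a $\varrho-$polynomial, parts $(b_1)$ and $(b_2)$ give $b_q=1$, $b_0=0$, and that $R$ is a $\varrho-$polynomial. Being also monic with $R(0)=0$, either $R=X^r$ or
$$ R(X)=X^r+c_{e'}X^{e'}+c_{e'-1}X^{e'-1}+\ldots+c_1X,\qquad c_{e'}\ne 0,\ 1\le e'\le r-2 ; $$
so the only thing left for the first assertion is to show that in the second case $c_{e'}=r$.

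In that case $P$ is not a monomial either: if $P=X^d$ then $\Delta_P=d$ by convention, whereas Theorem~\ref{thm33}$(b_2)$ would force $\Delta_P=\Delta_R=r-e'<r<d$. Hence $P(X)=X^d+dX^e+a_{e-1}X^{e-1}+\ldots+a_1X$ with second degree $e$, and $(b_2)$ gives $\Delta_P=\Delta_R$, i.e. $e=(q-1)r+e'$. Now I would extract the coefficient of $X^e$ from $Q\circ R=R^q+b_{q-1}R^{q-1}+\ldots$: the summands $b_jR^j$ with $j<q$ have degree $jr\le(q-1)r<e$, so they do not reach degree $e$, and expanding $(X^r+c_{e'}X^{e'}+\text{lower terms})^q$ shows that between degrees $e$ and $qr=d$ the only monomial is the leading $X^d$, while the coefficient of $X^{(q-1)r+e'}=X^e$ is exactly $q\,c_{e'}$. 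Comparing with the $\delta-$normalisation of $P$, whose $X^e$-coefficient is $d=qr$, yields $c_{e'}=r$, so $R$ is a $\delta-$polynomial.

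For the special case $R=X^r$ I would argue directly from $P(X)=Q(X^r)=X^{qr}+b_{q-1}X^{(q-1)r}+\ldots+b_1X^r$, so that the support of $P$ consists of multiples of $r$. If $Q=X^q$, then $P=X^d$, $\Delta_P=d=qr\ne r$, $Q$ is a $\varrho-$polynomial, and $\Delta_P=qr=r\Delta_Q$. If $Q\ne X^q$, let $e''$ be its second degree, $1\le e''\le q-1$; then the second degree of $P$ is $e''r$, hence $\Delta_P=r(q-e'')$. In this case $Q$ fails to be a $\varrho-$polynomial exactly when $e''=q-1$, i.e. when $\Delta_P=r$; and when $e''\le q-2$, that is when $Q$ is a $\varrho-$polynomial, one has $\Delta_Q=q-e''$ and therefore $\Delta_P=r\Delta_Q$. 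Collecting the two subcases gives precisely: $Q$ is a $\varrho-$polynomial if and only if $\Delta_P\ne r$, and then $\Delta_P=r\Delta_Q$.

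I do not expect a genuine obstacle: everything is forced by Theorem~\ref{thm33} and the shape of a $\delta-$polynomial. The one place that needs a little care is the bookkeeping in expanding $R^q$, where one must be sure that no cross-term built from the lower part of $R$ reaches degree $\ge e$ other than the single monomial $q\,c_{e'}X^{(q-1)r+e'}$. This is immediate once one observes that $\Delta_R=r-e'$ and that replacing any one factor $X^r$ in the product $R^q$ by a lower monomial of $R$ drops the total degree by at least $\Delta_R$, so using $k\ge1$ such factors lands in degree $\le qr-k\Delta_R\le(q-1)r+e'=e$, with equality only when $k=1$ and that factor is $c_{e'}X^{e'}$.
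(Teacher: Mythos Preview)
Your proof is correct and follows essentially the same route as the paper. The only cosmetic difference is that where you expand $R^q$ by hand to extract the coefficient of $X^e$, the paper simply quotes the coefficient identities already derived in the proof of Theorem~\ref{thm33} (namely $a_{n-r+e'}=q\,c_{e'}$) to get $qr=d=a_e=q\,c_{e'}$ and hence $c_{e'}=r$; the substance is identical.
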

\begin{proof}
If the polynomial $R(X)$ is not $X^r$, from the proof of Theorem  \ref{thm33}, we find that
$qr=n=a_e=qc_{r-n+e}$, hence the second non-zero coefficient of $R(X)$ is $r$. If $R(X)=X^r$ and
$ Q(X)=X^q+b_fX^f+\ldots +b_1X$, where $b_f\neq 0$, then $\Delta_P=r\Delta_Q$. In this case 
$b_f=n\neq q$, hence $Q(X)$ is not a $\delta-$polynomial. Only in the case $Q(X)=X^q$, $Q$ 
is a $\delta-$polynomial and again $\Delta_P=n=qr=r\Delta_Q$.
\end{proof}
\begin{remark}
If $P(X)=Q(X)\circ R(X)$ where $Q(X)$ is an $\mu\nu-$polynomial and $R(X)$ is a $\delta-$polynomial, then 
$P(X)$ is an $\mu\varrho-$polynomial (if $R(X)\neq X^r$, then $P(X)$ is a $\delta-$polynomial).
\end{remark}
\begin{example}
The $\delta$ decomposition of a $\delta-$polynomial looks nicer than a decomposition with the 
first component a reduced polynomial:
$$ X^6+6X^4+8X^2=(X^3+6X^2+8X)\circ X^2=(X^3-4X)\circ (X^2+2). $$  
\end{example}
\begin{remark}
The proof of Theorem \ref{thm33} gives an algorithm to decide if a polynomial $P(X)$ is decomposable 
(and, in this case, to find a decomposition): 
if $n={\rm deg}(P)$ is prime, then $P(X)$ is indecomposable. For a pair $(q,r)$ such that $n=qr$, 
we check if there is a $(q,r)$ decomposition of $P(X)$: we can work with $P_1(X)$, a $\delta-$polynomial
associated with $P(X)$. Using the formulae in the proof of Theorem \ref{thm33}, we compute the
coefficients of the two associated polynomials $Q(X)$ and $R(X)$ in a normal $(q,r)$ decomposition. 
Finally we compute the composition $Q(X)\circ R(X)=P_2(X)$: if $P_1=P_2$, then $P(X)$ have a $(q,r)$ 
decomposition; otherwise we restart with another pair $(q',r')$ satisfying $n=q'r'$, if any.
\end{remark}

An $(q_1,q_2,\ldots,q_s)$ \textit{decomposition } of the polynomial $P(X)\in K[X]$ is given by
$$ P(X)=Q_1(X)\circ Q_2(X)\circ\ldots\circ Q_s(X),  $$
where $Q_j(X)$ are polynomials with coefficients in some extension $K<L$ and ${\rm deg}(Q_j)=q_j\geq 2$.
From Proposition \ref{prop31} and Theorem \ref{thm33} this decomposition holds for the polynomials 
associated with $P(X)$ and we can find such a decomposition with $Q_j(X)\in K[X]$. Another consequence 
of these and of Corollary \ref{cor34} is the following result.
\begin{corollary}\label{cor35}
($\mu\nu\mu\nu\delta$ decomposition) If a $\delta-$polynomial $P(X)$ in $K[X]$ has a 
$(q_1,q_2,\ldots,q_s)$ decomposition, then there is a unique decomposition 
$$ P(X)=Q_1(X)\circ Q_2(X)\circ\ldots\circ Q_s(X),\, {\rm deg}(Q_j)=q_j,  $$
where $Q_1(X),\ldots,Q_{s-1}(X)$ are $\mu\nu-$polynomials and $Q_s(X)$ is a $\delta-$polynomial.
\end{corollary}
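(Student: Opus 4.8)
The plan is to reduce everything to Theorem \ref{thm33} and Corollary \ref{cor34} by an induction on $s$, but carried out in a slightly larger class of polynomials than the $\delta$-polynomials. The reason is that $\delta$-polynomials are \emph{not} stable under composition — as the Remark following Corollary \ref{cor34} shows, composing on the right with $X^r$ destroys the normalization of the second coefficient — whereas $\mu\nu$-polynomials \emph{are}: if $A,B$ are monic with $A(0)=B(0)=0$, then $A\circ B$ is monic and $(A\circ B)(0)=A(0)=0$. Accordingly I would first isolate the following Lemma: \emph{if a $\mu\nu$-polynomial $A(X)\in K[X]$ has a $(q_1,\ldots,q_t)$ decomposition, then it has a unique decomposition $A=Q_1\circ\cdots\circ Q_t$ with $\deg Q_j=q_j$ and every $Q_j$ a $\mu\nu$-polynomial; moreover each $Q_j\in K[X]$.}

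To prove the Lemma I induct on $t$, the case $t=1$ being trivial. For $t\ge 2$, regard the given factorization as an $(q_1\cdots q_{t-1},\,q_t)$ decomposition $A=(Q_1\circ\cdots\circ Q_{t-1})\circ Q_t$ and apply Theorem \ref{thm33}: $A$ has a \emph{unique} normal $(q_1\cdots q_{t-1},\,q_t)$ decomposition $A=A'\circ Q_t$ with $Q_t$ a $\mu\nu$-polynomial and $A',Q_t\in K[X]$. By Theorem \ref{thm33}(b$_1$), $A$ being a $\mu$- and a $\nu$-polynomial forces $A'$ to be one as well, so $A'$ is a $\mu\nu$-polynomial; and since the normalization alters the old factorization of $A$ only by composing its last $\mu\nu$-factor with an affine map on the right, $A'$ still carries a $(q_1,\ldots,q_{t-1})$ decomposition. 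The inductive hypothesis applied to $A'$ produces $Q_1,\ldots,Q_{t-1}$, uniquely and in $K[X]$; this gives existence. For uniqueness, given any factorization $A=Q_1\circ\cdots\circ Q_t$ into $\mu\nu$-polynomials of the prescribed degrees, the head $Q_1\circ\cdots\circ Q_{t-1}$ is again a $\mu\nu$-polynomial, so $A=(Q_1\circ\cdots\circ Q_{t-1})\circ Q_t$ is \emph{a} normal $(q_1\cdots q_{t-1},\,q_t)$ decomposition; Theorem \ref{thm33} then pins down $Q_t$ and $Q_1\circ\cdots\circ Q_{t-1}=A'$, and the inductive hypothesis pins down $Q_1,\ldots,Q_{t-1}$.

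Granting the Lemma, Corollary \ref{cor35} follows quickly. A $\delta$-polynomial is monic with vanishing constant term, hence is a $\mu\nu$-polynomial, so the Lemma provides the unique decomposition $P=Q_1\circ\cdots\circ Q_s$ into $\mu\nu$-polynomials with all $Q_j\in K[X]$. It remains only to upgrade the last factor: the head $Q_1\circ\cdots\circ Q_{s-1}$ is a $\mu\nu$-polynomial, so $P=(Q_1\circ\cdots\circ Q_{s-1})\circ Q_s$ is the normal $(q_1\cdots q_{s-1},\,q_s)$ decomposition of the $\delta$-polynomial $P$, and Corollary \ref{cor34} ($\delta$ decomposition) then tells us that $Q_s$ is a $\delta$-polynomial. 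Uniqueness within the Corollary is immediate: any decomposition with $Q_1,\ldots,Q_{s-1}$ $\mu\nu$-polynomials and $Q_s$ a $\delta$-polynomial is a fortiori a decomposition into $\mu\nu$-polynomials, hence coincides with the one produced by the Lemma.

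I expect the only genuinely subtle step to be the very first one: recognizing that the induction should be set up in the class of $\mu\nu$-polynomials rather than $\delta$-polynomials, and that this works precisely because $\mu\nu$-polynomials are closed under composition — which is exactly what guarantees that any candidate factorization is automatically a normal decomposition at the top level, allowing the uniqueness in Theorem \ref{thm33} to be invoked at each stage. Everything after that is bookkeeping: Theorem \ref{thm33} controls the head and the tail at each step, and one final application of Corollary \ref{cor34} restores the $\delta$-normalization of the last factor; no computation beyond what already appears in the proof of Theorem \ref{thm33} is required.
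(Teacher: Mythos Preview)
Your proof is correct and is precisely the argument the paper leaves implicit: the paper offers no separate proof of Corollary~\ref{cor35}, merely citing it as ``another consequence'' of Proposition~\ref{prop31}, Theorem~\ref{thm33}, and Corollary~\ref{cor34}, and your induction through $\mu\nu$-polynomials is the natural way to unpack that. One tiny wording slip: in the inductive step you refer to ``its last $\mu\nu$-factor'', but the given $(q_1,\ldots,q_t)$ decomposition need not have $\mu\nu$-factors at all; what you mean (and use) is simply that the normalization replaces the head $S_1\circ\cdots\circ S_{t-1}$ by $S_1\circ\cdots\circ(S_{t-1}\circ\alpha^{-1})$, which is still a $(q_1,\ldots,q_{t-1})$ decomposition.
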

Let us denote by $\mathcal{D}^{q_1,\ldots,q_s;\Delta}$ the set of $\delta-$polynomials
$$ \{P(X)\in K[X]\mbox{ with descent }\Delta\mbox{ and a }\mu\nu\mu\nu\delta\,(q_1,\ldots,q_s)
            \mbox{ decomposition}\}. $$
We consider this set, if non-empty, as a vector subspace of the vector space of coefficients of 
monic polynomials $Q_1,Q_2,\ldots,Q_s$:
$$ \mathcal{D}^{q_1,\ldots,q_s;\Delta}<K^{q_1}\times K^{q_2}\times\ldots \times K^{q_s}. $$ 
\begin{prop}\label{prop36}
We have
$$ {\rm dim}(\mathcal{D}^{q_1,\ldots,q_s;\Delta}) =\begin{cases}
 \sum_{j=1}^sq_j-\Delta-s,    & \mbox{ if }q_s>\Delta,                                                \\
 \sum_{j=1}^rq_j-\Delta_1-r   & \mbox{ if }\Delta=\Delta_1\prod_{j=r+1}^sq_j,\,\Delta_1\in\mathbb{N}
                                                                          \mbox{ and } q_r>\Delta_1,  \\
 0,                           & \mbox{ if }\Delta=\prod_{j=1}^sq_j.                                   \\
   \end{cases} $$
Otherwise the set $\mathcal{D}^{q_1,\ldots,q_s;\Delta}$ is empty.   
\end{prop}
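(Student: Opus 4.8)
The plan is to parametrize $\mathcal{D}^{q_1,\ldots,q_s;\Delta}$ by the coefficients of the factors of the \emph{unique} normal $\mu\nu\mu\nu\delta$ decomposition $P=Q_1\circ\cdots\circ Q_s$ provided by Corollary \ref{cor35}, and then to read off the dimension as the number of coefficients left free. Uniqueness makes $P\mapsto(Q_1,\ldots,Q_s)$ a bijection onto its image, so the set is identified with a locus inside the coefficient space $K^{q_1}\times\cdots\times K^{q_s}$. The structural fact I would record first is that every \emph{tail} $T_i:=Q_i\circ\cdots\circ Q_s$ is again a $\delta$-polynomial: $T_i$ is a composite of $\mu\nu$-polynomials (the innermost being $Q_s$, which is $\delta$, hence $\mu\nu$), so $P=(Q_1\circ\cdots\circ Q_{i-1})\circ T_i$ is a normal two-block decomposition of the $\delta$-polynomial $P$, and Corollary \ref{cor34} forces $T_i$ to be $\delta$. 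I would also record (from Theorem \ref{thm33}$(b_2)$ and the Remark following Corollary \ref{cor34}) that for a $\mu\nu$-polynomial $Q$ and a non-monomial $\delta$-polynomial $R$ the composite $Q\circ R$ is again a $\delta$-polynomial with the same descent as $R$.

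Next I introduce the \emph{threshold} $t\in\{0,1,\ldots,s\}$ defined by $Q_{t+1}=X^{q_{t+1}},\ldots,Q_s=X^{q_s}$ while $Q_t\ne X^{q_t}$ (and $t=0$ if all factors are monomials), and split into the three cases of the statement.
\begin{romenum}
\item $t=0$: then $P=X^n$, $n=\prod q_j$, so $\Delta=n$; the set is the single point $\{X^n\}$, of dimension $0$.
\item $t=s$, i.e.\ $Q_s\ne X^{q_s}$: the descent-preservation fact, applied up the chain of tails $T_s,\ldots,T_1=P$, gives $\Delta=\Delta_P=\Delta_{Q_s}$, forcing $q_s>\Delta$; then $Q_s$ runs over the $\delta$-polynomials $X^{q_s}+q_sX^{q_s-\Delta}+a_{q_s-\Delta-1}X^{q_s-\Delta-1}+\cdots+a_1X$ and $Q_1,\ldots,Q_{s-1}$ over all $\mu\nu$-polynomials, leaving $\sum_{j=1}^{s-1}(q_j-1)+(q_s-\Delta-1)=\sum_{j=1}^{s}q_j-\Delta-s$ free coefficients.
\item $1\leq t\leq s-1$: put $m:=q_{t+1}\cdots q_s\geq 2$, so $T_t=Q_t(X^m)$ is $\delta$ of degree $q_tm$. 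If $f_t$ is the second degree of $Q_t$, the second-degree term of $T_t$ is $a_{f_t}X^{f_tm}$, so being $\delta$ forces $a_{f_t}=q_tm$ and $\Delta_{T_t}=(q_t-f_t)m$; by descent-preservation $\Delta=(q_t-f_t)m$. With $\Delta_1:=q_t-f_t$ this reads $\Delta=\Delta_1\prod_{j=t+1}^{s}q_j$, $1\leq\Delta_1\leq q_t-1$; now $Q_t=X^{q_t}+q_tmX^{q_t-\Delta_1}+a_{q_t-\Delta_1-1}X^{q_t-\Delta_1-1}+\cdots+a_1X$, the $Q_1,\ldots,Q_{t-1}$ are arbitrary $\mu\nu$-polynomials, and $Q_{t+1},\ldots,Q_s$ are fixed, so there remain $\sum_{j=1}^{t-1}(q_j-1)+(q_t-\Delta_1-1)=\sum_{j=1}^{t}q_j-\Delta_1-t$ free coefficients.
\end{romenum}

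In each case one checks the converse: any admissible choice of the free coefficients produces a genuine $\delta$-polynomial of descent $\Delta$ possessing a $(q_1,\ldots,q_s)$ decomposition (using again the Remark after Corollary \ref{cor34}, applied from the inside out), and distinct choices give distinct polynomials by the uniqueness in Corollary \ref{cor35}; hence $\mathcal{D}^{q_1,\ldots,q_s;\Delta}$ is an affine subspace of exactly the asserted dimension. Since every member of the set has some threshold, the cases are exhaustive, so when $\Delta$ matches none of $\Delta=\prod q_j$, $q_s>\Delta$, or $\Delta=\Delta_1\prod_{j=r+1}^{s}q_j$ with $\Delta_1\in\mathbb{N}$, $q_r>\Delta_1$, the set is empty. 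It remains to note that these patterns are mutually exclusive and that the pair $(r,\Delta_1)$ in the last one is unique: if $r<r'$ both satisfied it, then $\Delta_1'=\Delta_1q_{r+1}\cdots q_{r'}\geq q_{r'}$, contradicting $q_{r'}>\Delta_1'$. (Throughout one assumes $\Delta$ is a legitimate descent, i.e.\ $\Delta\geq 2$ or $\Delta=n$, since no $\delta$-polynomial has descent $1$.)

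The step I expect to be the main obstacle is case (iii). The subtlety is that Corollary \ref{cor35} only forces the \emph{innermost} factor $Q_s$ to be a $\delta$-polynomial, so when $Q_s$ and the following factors are plain monomials the $\delta$-constraint is not lost but is transferred onto the last non-monomial factor $Q_t$: it must carry the exact coefficient $q_tm$ on its second-degree term (and, when $\Delta_1=1$, a nonzero $X^{q_t-1}$ term, so $Q_t$ need not be reduced). Showing that this is the \emph{only} restriction on $Q_t$, while $Q_1,\ldots,Q_{t-1}$ remain entirely free, is the heart of the matter; it comes out of the shape of the coefficient recursions in the proof of Theorem \ref{thm33}, combined with the fact established above that the tail $T_t$ is automatically a $\delta$-polynomial.
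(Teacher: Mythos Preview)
Your proof is correct and follows essentially the same approach as the paper's: both argue by fixing the threshold $t$ (the index of the last non-monomial factor), observing that the tail $T_t=Q_t\circ X^{q_{t+1}\cdots q_s}$ being a $\delta$-polynomial forces the single constraint $a_{q_t-\Delta_1}=q_tq_{t+1}\cdots q_s$ on $Q_t$, and then counting the remaining free coefficients of $Q_1,\ldots,Q_t$. You are more explicit than the paper about the bijection coming from Corollary \ref{cor35}, about why every tail is $\delta$, about the converse direction, and about mutual exclusivity of the cases (including uniqueness of $(r,\Delta_1)$), whereas the paper simply records the parametrization and the dimension in each case without spelling these points out.
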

\begin{proof}
If $Q_s\neq X^{q_s}$, then 
$$ \begin{array}{l}
Q_j(X)=X^{q_j}+a_{j,q_j-1}X^{q_j-1}+\ldots +a_{j,1}X,\,(j=1,\ldots,s-1), \\
Q_s(X)=X^{q_s}+q_sX^e+a_{s,e-1}X^{e-1}+\ldots +a_{s,1}X,\,\Delta=q_s-e\leq q_s-1\mbox{ and } \\
\quad\quad {\rm dim}(\mathcal{D}^{q_1,\ldots,q_s;\Delta})=\sum_{j=1}^{s-1}(q_j-1)+(e-1)=
             \sum_{j=1}^s q_j-\Delta-s.     
\end{array} $$
If $Q_r(X)=X^{q_r}+a_{r,q_r-\Delta_1}X^{q_r-\Delta_1}+\ldots +a_{r,1}X$, where 
$a_{r,q_r-\Delta_1}\neq 0$, and $Q_j(X)=X^{q_j}$, $j=r+1,\ldots,s$, then
$$ \Delta=\Delta_1q_{r+1}\ldots q_s, \,a_{r,q_r-\Delta_1}=q_rq_{r+1}\ldots q_s $$
and
$ {\rm dim}(\mathcal{D}^{q_1,\ldots,q_s;\Delta})=\sum_{j=1}^{r-1}(q_j-1)+(q_r-\Delta_1-1)=
             \sum_{j=1}^r q_j-\Delta_1-r. $\\
If $Q_j(X)=X^{q_j}$ for any $j\in\{1,2,\ldots,s\}$, then $\Delta=\prod_{j=1}^s q_j$ and 
$$  \mathcal{D}^{q_1,\ldots,q_s;\Delta}=\{X^n=X^{q_1}\circ X^{q_2}\circ\ldots\circ X^{q_s}\}. $$              
\end{proof}


\section{Indecomposable polynomials}\label{sectin}

We begin with a few examples of indecomposable polynomials, most of them will appear in a 
presentation of the monoid $(\mathbb{C}\mbox{[X]},\circ)$.

{\bf Prime degrees.} If the degree of $P(X)$ is a prime number, then it is indecomposable.

{\bf Monomials.} $X^n$ in indecomposable if and only if $n$ is a prime number.

{\bf Chebyshev polynomials.} ${\bf T}_n(X)=\cos(n\arccos X)$ is indecomposable if and only if $n$ 
is a prime number: the degree of ${\bf T}_n(X)$ is $n$ and we have 
${\bf T}_{mk}(X)={\bf T}_m(X)\circ {\bf T}_k(X)$.

Using the recurrence relation ${\bf T}_{n+1}(X)=2X{\bf T}_n(X)-{\bf T}_{n-1}(X)$, we find that
 ${\bf T}_n(-X)=(-1)^n{\bf T}_n(X)$ and also that
$$\begin{array}{lll}
{\bf T}_2(X) & = & 2X^2-1   \sim  X^2,                                            \\
{\bf T}_3(X) & = & 4X^3-3X  \sim  X^3+3X,                                         \\
{\bf T}_n(X) & = & 2^{n-1}X^n-n2^{n-3}X^{n-2}+n(n-3)2^{n-6}X^{n-4}+\ldots \sim    \\
             & \sim & X^n+nX^{n-2}+\frac{n(n-3)}{2}X^{n-4}+\ldots \, (n\geq 4). 
\end{array}    $$
As a consequence, for $n$ odd, $T_n(X)=2^{1-n}{\bf T}_n(X)$ in an $\mu\varrho$-polynomial with descent 2 
and also that $T_n(X)=XU_n(X^2)$. 

{\bf Cyclotomic polynomials.} $\Phi_n(X)$ is the cyclotomic polynomial, the minimal monic polynomial of a 
primitive root of unity of order $n$, with rational coefficients.
\begin{prop}\label{prop41}
$\Phi_n(X)$ is indecomposable if and only if $n$ is a product of distinct 
prime numbers or $n=4$. 
\end{prop}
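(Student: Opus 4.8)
The plan is to analyze $\Phi_n(X)$ according to the structure of $n$, exploiting the degree constraint $\deg\Phi_n = \varphi(n)$ together with what is already known about indecomposable polynomials. First I would dispose of the easy cases: if $n$ is a product of distinct primes, then a short computation shows that $\varphi(n)$ can still be composite, so the degree test does not immediately suffice; instead I would argue directly that $\Phi_n(X)$ is indecomposable by using the distribution of its roots. If $\Phi_n = Q\circ R$ with $\deg R = r\geq 2$, then every primitive $n$th root of unity $\zeta$ satisfies $R(\zeta) = \xi$ for some fixed root $\xi$ of $Q$; so the set of primitive $n$th roots of unity is partitioned into fibers of $R$, each of size $r$, and on each fiber $R$ takes the same value. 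I would try to derive a contradiction from the Galois action: the Galois group $(\Z/n\Z)^{\times}$ acts transitively on the primitive $n$th roots of unity, and when $n$ is squarefree this action is "large enough" that no nonconstant polynomial of degree $2\le r<\varphi(n)$ can be constant on a nontrivial block, because the only $\Q$-rational subfields of $\Q(\zeta_n)$ whose degree divides into a compatible pattern would force $R$ to be (up to $\mathrm{Aff}$) a known polynomial — a monomial, a Chebyshev polynomial, or a Ritt polynomial — none of which carries primitive $n$th roots of unity to a single value when $n$ is squarefree.

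The real content is the converse and the exceptional case $n=4$. For $n=4$ we have $\Phi_4(X) = X^2+1 \sim X^2$, which is indecomposable simply because its degree is $2$; this is the one non-squarefree $n$ that slips through, and I would note it is exactly the case $\varphi(n)=2$. For the remaining non-squarefree $n$ (so $n$ has a repeated prime factor and $n\neq 4$), I would exhibit an explicit decomposition. The key classical identity is $\Phi_{pm}(X) = \Phi_{m}(X^{p})$ whenever $p$ is a prime dividing $m$. Iterating, if $n = p_1^{a_1}\cdots p_k^{a_k}$ and we set $n_0 = p_1\cdots p_k$ (the radical of $n$), then $\Phi_n(X) = \Phi_{n_0}\!\left(X^{n/n_0}\right) = \Phi_{n_0}(X)\circ X^{n/n_0}$, and since $n$ is not squarefree, $n/n_0 \geq 2$; also $\deg\Phi_{n_0} = \varphi(n_0)\geq 2$ provided $n_0\geq 3$. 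The only way this fails to be a genuine $(q,r)$ decomposition is if $\varphi(n_0) = 1$, i.e. $n_0\in\{1,2\}$, forcing $n$ to be a power of $2$; and among powers of $2$ that are not squarefree, $n=4$ gives $\Phi_4\sim X^2$ (indecomposable), while $n = 2^a$ with $a\geq 3$ gives $\Phi_{2^a}(X) = \Phi_2(X^{2^{a-1}}) = X^{2^{a-1}}+1 \sim X^{2^{a-1}}$, which \emph{is} decomposable since $2^{a-1}\geq 4$ is composite. So in every non-squarefree case except $n=4$ we get a decomposition, as claimed.

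The main obstacle is the squarefree direction: showing that $\Phi_n(X)$ with $n$ squarefree, $n\geq 3$, is genuinely indecomposable rather than just invoking a degree count (which fails, e.g. $\varphi(15)=8$). I expect the cleanest route is to combine the $G$-invariance of decomposability (Proposition \ref{prop31}) with Ritt's first theorem (to be stated later as Theorem \ref{thm54}/Theorem \ref{thm56}) to reduce any hypothetical decomposition $\Phi_n = Q\circ R$ to one where $R$ is, up to $\mathrm{Aff}$-equivalence, a prime-power monomial or a Chebyshev polynomial; then I would check that neither type can serve as an inner factor of a cyclotomic polynomial with squarefree index, by comparing roots: a monomial inner factor forces $\Phi_n$ to have the form $\Phi_m(X^p)=\Phi_{mp}(X)$, contradicting squarefreeness, while a Chebyshev inner factor would force $\Phi_n$ to be, up to $\mathrm{Aff}$, a Chebyshev or Dickson polynomial, which have the wrong roots (real, lying in $[-1,1]$ after normalization) to be cyclotomic for $n\geq 3$ with $n\neq 4,6$ — and the small cases $n=3,5,6$ have prime-power degree $\varphi(n)\in\{2,4\}$ handled directly, with $n=6$ giving $\Phi_6 = X^2-X+1\sim X^2$. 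Pinning down this last comparison rigorously, rather than hand-waving about "wrong roots," is where the genuine work lies; the alternative, a self-contained Galois-theoretic argument that no intermediate field of $\Q(\zeta_n)/\Q$ together with a degree-$r$ polynomial relation can exist, is also viable but requires care about when $(\Z/n\Z)^{\times}$ fails to have suitable subgroups.
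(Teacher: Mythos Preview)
Your treatment of the non-squarefree direction is correct and matches the paper's Proposition~\ref{prop42}: both use $\Phi_{mk^2}(X) = \Phi_{mk}(X) \circ X^k$ (equivalently your iterated $\Phi_{pm}(X) = \Phi_m(X^p)$ for $p \mid m$) to exhibit a decomposition, and both isolate $n=4$ as the lone non-squarefree exception.

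The squarefree direction has a genuine gap. Your proposed use of Ritt's theorems is misguided: Theorems~\ref{thm54} and~\ref{thm56} describe how two \emph{different} complete factorizations of the same polynomial are connected by elementary moves involving the special polynomials ${\bf M}_p$, ${\bf T}_p$, ${\bf R}^{p,s}_\ast$; they impose no restriction whatsoever on the factors appearing in a \emph{single} decomposition $\Phi_n = Q \circ R$. There is no reason $R$ must be, up to ${\rm Aff}$, a monomial, Chebyshev, or Ritt polynomial --- every polynomial of prime degree is indecomposable, for instance --- so the reduction you announce does not exist and your case analysis is never reached. Your Galois-theoretic fallback is closer to viable, but intermediate fields of $\Q(\zeta_n)/\Q$ certainly exist for squarefree $n$ (e.g.\ $\Q(\zeta_d)$ for $d\mid n$, or the maximal real subfield), so the burden is to show that the minimal polynomial of $\zeta_n$ over such a field never has the shape $R(X)-\beta$ with $R\in\Q[X]$; you have not done this, and it is not obvious.

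The paper's argument (Propositions~\ref{prop43}--\ref{prop44}) is far shorter and avoids Ritt entirely. Take a normal $(q,r)$ decomposition $\Phi_n = Q \circ R$ with $n=p_1\cdots p_s$ squarefree. By Proposition~\ref{prop43} the coefficient of $X^{\varphi(n)-1}$ in $\Phi_n$ is $(-1)^{s+1}$, so the formulae in Theorem~\ref{thm33} force $R\in\Q[X]$ monic with coefficient $c_{r-1} = \frac{(-1)^{s+1}}{q}$. For any root $\alpha$ of $Q$, every root of $R(X)-\alpha$ is a root of $\Phi_n$, hence a root of unity, hence an algebraic integer; but their sum is $-c_{r-1} = \frac{(-1)^s}{q}$, a rational non-integer. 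This two-line algebraic-integer trick replaces the entire Ritt/Galois apparatus you were reaching for.
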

One implication is given by the following proposition (see \cite{L}, chapter VI,3, for a similar relation, 
under the correct hypothesis $p^2\mid n$); the proof of the second implication will be given in Proposition 
\ref{prop44}. The polynomial $\Phi_{mk}(X)$ has degree 1 if and only if $mk\leq 2$ and this explains the 
special case $n=4$.
\begin{prop}\label{prop42}
$$ \Phi_{mk^2}(X)=\Phi_{mk}(X)\circ X^k. $$
\end{prop}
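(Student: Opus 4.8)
The plan is to verify this identity by comparing roots (with multiplicity) of the two sides, which are both monic. The right-hand side is $\Phi_{mk}(X^k)$; a complex number $\zeta$ is a root of it precisely when $\zeta^k$ is a primitive $mk$-th root of unity. So the first step is to show: if $\zeta^k$ has exact order $mk$, then $\zeta$ has exact order $mk^2$. For this one uses the elementary fact that if $\zeta$ has order $d$, then $\zeta^k$ has order $d/\gcd(d,k)$. Hence $d/\gcd(d,k)=mk$, and the task is to deduce $d=mk^2$. Writing $d=mk\cdot\gcd(d,k)$, one sees $k\mid d$ (since $k\mid mk\mid d$ only after checking—actually $k$ divides $mk\cdot\gcd(d,k)$ trivially), so $\gcd(d,k)=k$, giving $d=mk^2$. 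The one subtlety here is that from $d/\gcd(d,k)=mk$ we must rule out $\gcd(d,k)<k$: if $\gcd(d,k)=g<k$ then $d=mkg$, and then $\gcd(mkg,k)$ must equal $g$; but $k\mid mkg$ so $\gcd(mkg,k)=k\neq g$, contradiction. So indeed $g=k$ and $d=mk^2$.

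Next I would check multiplicities and the degree count. Each primitive $mk$-th root of unity $\omega$ contributes exactly $k$ distinct $k$-th roots $\zeta$ with $\zeta^k=\omega$, and by the previous paragraph every such $\zeta$ is a primitive $mk^2$-th root of unity; conversely every primitive $mk^2$-th root $\zeta$ satisfies $(\zeta^k)$ has order $mk^2/\gcd(mk^2,k)=mk^2/k=mk$, so arises this way. Since $\Phi_{mk}(X)$ has simple roots and the substitution $X\mapsto X^k$ preserves simplicity of roots off $0$ (and $0$ is not a root here, as $mk\ge 1$), the polynomial $\Phi_{mk}(X^k)$ has simple roots, exactly the primitive $mk^2$-th roots of unity. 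Hence it equals $\Phi_{mk^2}(X)$, both being the monic polynomial whose simple roots are precisely the primitive $mk^2$-th roots of unity. A degree sanity check: $\deg\Phi_{mk}(X^k)=k\varphi(mk)$, and one must confirm $\varphi(mk^2)=k\varphi(mk)$, which holds because $k^2$ and $mk$ share all prime factors of $k$, so the multiplicativity-with-correction formula $\varphi(ab)=\varphi(a)\varphi(b)\gcd(a,b)/\varphi(\gcd(a,b))$ collapses appropriately; more directly, $\varphi(mk^2)/\varphi(mk)=k$ follows from $\varphi(p^{a+1})/\varphi(p^a)=p$ applied prime-by-prime to the factors of $k$.

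The main obstacle, such as it is, is purely bookkeeping with orders of roots of unity: making the bijection "primitive $mk$-th roots $\leftrightarrow$ primitive $mk^2$-th roots via $k$-th power, $k$-to-$1$" completely rigorous, including the multiplicity claim. Everything else is formal. An alternative, perhaps cleaner, route avoids roots entirely: use the standard product formula $X^N-1=\prod_{d\mid N}\Phi_d(X)$ together with the observation that substituting $X\mapsto X^k$ in $X^{mk}-1=\prod_{d\mid mk}\Phi_d(X)$ gives $X^{mk^2}-1=\prod_{d\mid mk}\Phi_d(X^k)$, and then an induction on the number of prime factors of $mk$ (or a Möbius-inversion argument) identifying $\Phi_{mk}(X^k)$ as the "new" factor not accounted for by divisors $d\mid mk$ with $d\neq mk$; one checks those smaller factors $\Phi_d(X^k)$ match $\prod$ of $\Phi_{d'}$ with $d'\mid mk^2$, $d'\nmid$ anything larger, by the same order computation. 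I would present the root-counting proof as the primary one since it is shortest, and remark that the case $m k=1,2$ is where the analogue for $\Phi_{mk}$ degenerates to degree $1$, explaining the exceptional $n=4$ in Proposition \ref{prop41}.
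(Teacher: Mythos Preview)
Your proof is correct and follows essentially the same route as the paper: both arguments identify $\Phi_{mk}(X^k)$ and $\Phi_{mk^2}(X)$ by showing they are monic of the same degree $k\varphi(mk)=\varphi(mk^2)$ with the same set of simple roots, namely the primitive $mk^2$-th roots of unity. The only cosmetic difference is that the paper parametrizes the roots explicitly as $\xi^{a+bmk}$ and checks $\gcd(a+bmk,mk^2)=1$ directly, whereas you argue via the order formula $\mathrm{ord}(\zeta^k)=\mathrm{ord}(\zeta)/\gcd(\mathrm{ord}(\zeta),k)$; these are equivalent bookkeeping for the same bijection.
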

\begin{proof}
The numbers $\xi=e^{\frac{2\pi i}{mk^2}}$, $\eta=\xi^k$ and $\omega=\xi^{mk}$ are primitive roots of unity 
of orders $mk^2$, $mk$ and $k$ respectively. In the factorization 
$$ \Phi_{mk}(X)\circ X^k=\prod_{(a,mk)=1}(X^k-\eta^a)=\prod_{(a,mk)=1}\prod_{b=0}^{k-1}(X-\xi^a\omega^b)=
  \prod_{(a,mk)=1}^{b=0,..,k-1}(X-\xi^{a+bmk}) $$ 
the roots are distinct primitive roots of unity of order $mk^2$ (if $a$ and $mk$ are coprime, so are 
$a+bmk$ and $mk^2$). Euler function satisfies the relation $\varphi(mk^2)=\varphi(mk)k$ and this shows that
the monic separable polynomials $\Phi_{mk^2}(X)$ and $\Phi_{mk}(X)\circ X^k$ have the same degree and the 
same roots, hence they coincide.
\end{proof}

\begin{prop}\label{prop43}
In $\Phi_n(X)=X^{\varphi(n)}+c_{\varphi(n)-1}X^{\varphi(n)-1}+\ldots+c_1X+c_0$, $n\geq 2$, we have 
$$ \{ c_{\varphi(n)-1},c_1,c_0\}\subseteq \{-1,0,1\}. $$
More precisely, 
$$ \begin{array}{lll} 
c_1=c_{\varphi(n)-1} & = & \begin{cases}
      (-1)^{s+1}, & \mbox{if }n=p_1p_2\ldots p_s,(\mbox{distinct prime numbers})    \\
      0, & \mbox{if there is a prime }p \mbox{ such that }p^2\mid n.                \\
                           \end{cases}              \\
   c_0 & = & 1.
\end{array}    $$
\end{prop}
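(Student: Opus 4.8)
The plan is to exploit the standard product formula $\Phi_n(X)=\prod_{d\mid n}(X^d-1)^{\mu(n/d)}$ together with the closely related identity $\prod_{d\mid n}\Phi_d(X)=X^n-1$, and to extract the three relevant coefficients by elementary manipulations. First I would handle $c_0$: since $\Phi_n(0)$ can be read off from $\prod_{d\mid n}\Phi_d(0)=-1$ (the constant term of $X^n-1$), an easy induction on $n$, using $\Phi_1(0)=-1$ and $\Phi_p(0)=p$-free—more precisely $\Phi_p(0)=1$ for $p$ prime—shows $\Phi_n(0)=1$ for all $n\geq 2$; alternatively, $\Phi_n(0)=\prod_{(a,n)=1}(-\zeta^a)$ and the primitive $n$-th roots come in inverse pairs for $n\geq 3$, while $\Phi_2(0)=1$ directly.

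Next I would treat $c_{\varphi(n)-1}$, the coefficient of $X^{\varphi(n)-1}$, which equals $-\!\sum_{(a,n)=1}\zeta^a$, i.e.\ $-\mu(n)$ by the classical Möbius-function evaluation of the Ramanujan sum at $1$. This immediately gives $c_{\varphi(n)-1}=-\mu(n)$, which is $0$ when some $p^2\mid n$ and is $(-1)^{s+1}$ when $n=p_1\cdots p_s$ is squarefree with $s$ prime factors (note $-\mu(p_1\cdots p_s)=-(-1)^s=(-1)^{s+1}$). For $c_1$, the coefficient of $X$, I would use the symmetry $\Phi_n(X)=X^{\varphi(n)}\Phi_n(1/X)$ valid for $n\geq 2$ (the nontrivial primitive roots are closed under inversion, and for $n=2$ one checks $\Phi_2=X+1$ by hand); this palindromic property forces $c_1=c_{\varphi(n)-1}$, closing the argument. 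Finally, $\{c_{\varphi(n)-1},c_1,c_0\}\subseteq\{-1,0,1\}$ follows since each of the three values is either $\pm 1$ or $0$ by the above.

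The only mild subtlety—and the step I expect to need the most care—is the evaluation of the subleading coefficient as $-\mu(n)$: one must be sure that $\sum_{(a,n)=1,\,1\le a\le n}\zeta^a$ equals $\mu(n)$, which is the Ramanujan sum $c_n(1)$; this is standard (it follows from Möbius inversion applied to $\sum_{d\mid n}c_d(1)=\sum_{k=1}^n\zeta^k=0$ for $n>1$, with $c_1(1)=1$), but it is the one place where a genuine number-theoretic fact, rather than a formal polynomial identity, enters. Everything else is bookkeeping with the factorization $\prod_{d\mid n}\Phi_d(X)=X^n-1$ and the inversion symmetry of the roots.
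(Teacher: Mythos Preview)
Your proof is correct and covers the same ground as the paper's, but the packaging differs in one respect worth noting. Both you and the paper use the palindromic (reciprocal) property of $\Phi_n$ to obtain $c_1=c_{\varphi(n)-1}$; the paper also reads off $c_0=1$ from reciprocality (constant term $=$ leading coefficient), whereas you compute $c_0$ by a separate short argument. For the subleading coefficient, you invoke the identity $\sum_{(a,n)=1}\zeta^a=\mu(n)$ via M\"obius inversion of $\sum_{d\mid n}c_d(1)=[n=1]$, which handles the squarefree and non-squarefree cases uniformly. The paper instead splits: when $p^2\mid n$ it appeals to the earlier composition formula $\Phi_{mk^2}(X)=\Phi_{mk}(X)\circ X^k$ (Proposition~\ref{prop42}), which immediately forces $c_1=0$; when $n=p_1\cdots p_s$ is squarefree it carries out essentially the same M\"obius-inversion computation you describe, but written explicitly as an induction on $s$ using the decomposition $0=\sum_{d\mid n}SP_d$ and binomial sums. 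Your route is slightly more streamlined and self-contained; the paper's route has the advantage of linking the non-squarefree vanishing back to the decomposition theme that drives the surrounding section.
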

\begin{proof}
In the case $p^2\mid n$, $c_1=0$ is a consequence of Proposition \ref{prop42}. If $n$ is a product of 
$s$ distinct prime numbers, the proof is by induction. If $s=1$ we have 
$$ \Phi_p(X)=X^{p-1}+X^{p-2}+\ldots+X+1. $$
The sum $S$ of all the roots of unity of order $n=p_1p_2\ldots p_s$ is zero and it is also equal 
with the following sum, where $SP_m$ is the sum of primitive roots of unity of order $m$:
$$ \begin{array}{lll}
S & = & 1+\sum_{i=1}^sSP_{p_i}+\sum_{i<j}SP_{p_ip_j}+\ldots+\sum_{k=1}^sSP_{p_1..\hat{p_k}..p_s}+SP_n= \\
  & = & 1+{s\choose 1}(-1)+{s\choose 2}(-1)^2+\ldots+{s\choose s-1}(-1)^{s-1}+SP_n,
\end{array} $$
hence the last term is $(-1)^s$. Finally, the polynomial $\Phi_n(X)$ is reciprocal.
\end{proof}

\begin{prop}\label{prop44}
$\Phi_{p_1p_2\ldots p_s}(X)$ is indecomposable.
\end{prop}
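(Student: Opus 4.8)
The plan is to argue by contradiction, combining Theorem~\ref{thm33} with basic Galois theory of the cyclotomic field. We may assume $\varphi(n)\geq 2$, as otherwise $\Phi_n$ has degree $\leq 1$ and the statement is vacuous. Suppose $\Phi_n$ were decomposable. By Theorem~\ref{thm33} there is then a normal decomposition $\Phi_n=Q\circ R$ with $Q,R\in\Q[X]$, both monic, $R(0)=0$, and $q=\deg Q\geq 2$, $r=\deg R\geq 2$, $qr=\varphi(n)$. First I would note that $Q$ is irreducible over $\Q$: a nontrivial factorization $Q=Q_1Q_2$ would give the factorization $\Phi_n=(Q_1\circ R)(Q_2\circ R)$ of $\Phi_n$.

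Fix a primitive $n$-th root of unity $\zeta$ and set $\alpha=R(\zeta)$, so $Q(\alpha)=\Phi_n(\zeta)=0$; thus $Q$ is the minimal polynomial of $\alpha$ and $[\Q(\alpha):\Q]=q$. The automorphisms $\sigma_a\colon\zeta\mapsto\zeta^a$ ($a\in(\Z/n)^{\ast}$) of $\Q(\zeta)/\Q$ satisfy $R(\zeta^a)=\sigma_a(R(\zeta))=\sigma_a(\alpha)$ since $R$ has rational coefficients; hence the roots of $R(X)-\alpha$ are exactly the $\zeta^a$ with $\sigma_a(\alpha)=\alpha$, so, writing $H=\{a\in(\Z/n)^{\ast}:\sigma_a(\alpha)=\alpha\}$, we get
\[
R(X)-\alpha=\prod_{a\in H}(X-\zeta^a),\qquad |H|=r.
\]
Now comes the crucial step. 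Comparing constant terms and using $R(0)=0$ yields $\alpha=(-1)^{r+1}\zeta^{\sigma}$ with $\sigma=\sum_{a\in H}a$; thus $\alpha$ is, up to sign, itself a root of unity, so $\Q(\alpha)=\Q(\zeta^{\sigma})$ is generated by a primitive $n_0$-th root of unity, where $n_0$ is the order of $\zeta^{\sigma}$ and $n_0\mid n$. Consequently $H=\{a\in(\Z/n)^{\ast}:a\equiv 1\pmod{n_0}\}$.

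Finally I would use that $n$ is square-free. Writing $m=n/n_0$, we have $\gcd(n_0,m)=1$; the isomorphism $(\Z/n)^{\ast}\cong(\Z/n_0)^{\ast}\times(\Z/m)^{\ast}$ carries $H$ onto $\{1\}\times(\Z/m)^{\ast}$, so $r=\varphi(m)$, and since $r\geq 2$ we have $m\geq 3$. Every $n$-th root of unity is uniquely a product $\omega\varepsilon$ with $\omega^{n_0}=1$ and $\varepsilon^{m}=1$; writing $\zeta=\omega\varepsilon_0$ accordingly (with $\omega$ primitive of order $n_0$, $\varepsilon_0$ primitive of order $m$), one gets $\{\zeta^a:a\in H\}=\{\omega\varepsilon:\varepsilon\text{ a primitive }m\text{-th root of unity}\}$, and therefore
\[
R(X)-\alpha=\prod_{\varepsilon}(X-\omega\varepsilon)=\omega^{\,r}\,\Phi_m(\omega^{-1}X).
\]
Evaluating at $X=0$ and using $\Phi_m(0)=1$ (Proposition~\ref{prop43}) gives $\alpha=-\omega^{r}$ and $R(X)=\omega^{\,r}\bigl(\Phi_m(\omega^{-1}X)-1\bigr)$. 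If $\Phi_m(Y)=Y^{r}+c_{r-1}Y^{r-1}+\cdots+c_1Y+1$, the coefficient of $X^{r-1}$ in $R(X)$ equals $c_{r-1}\,\omega$; since $R\in\Q[X]$ this is rational, and since $m$ is square-free Proposition~\ref{prop43} gives $c_{r-1}=\pm 1\neq 0$, forcing $\omega\in\Q$, i.e.\ $\omega=\pm 1$. But then $\alpha=-\omega^{r}\in\{1,-1\}\subseteq\Q$, contradicting the irreducibility of $Q$, a polynomial of degree $q\geq 2$. Hence $\Phi_n$ is indecomposable.

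The step I expect to be the main obstacle is the middle one: establishing that $R(0)=0$ forces $\alpha$ to be (up to sign) a root of unity and then identifying $H$ as a congruence subgroup. This is exactly where the square-free hypothesis enters decisively — it guarantees that $\Phi_m$ has a nonvanishing subleading coefficient; for non-square-free $n$ that coefficient can vanish, which is precisely what makes decompositions such as $\Phi_{mk^2}=\Phi_{mk}\circ X^{k}$ (Proposition~\ref{prop42}) possible.
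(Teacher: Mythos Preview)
Your proof is correct, but the paper takes a considerably shorter route. After obtaining the normal decomposition $\Phi_n=Q\circ R$ with $R$ monic and $R(0)=0$, the paper reads off directly from the relation $a_{n-1}=q\,c_{r-1}$ in the proof of Theorem~\ref{thm33}, together with Proposition~\ref{prop43}, that the coefficient of $X^{r-1}$ in $R$ equals $(-1)^{s+1}/q$. For any root $\alpha$ of $Q$, the roots of $R(X)-\alpha$ are primitive $n$-th roots of unity, hence algebraic integers; their sum $(-1)^{s}/q$ is therefore an algebraic integer, yet it is a rational number that is not an integer since $q\geq 2$. That is the whole argument.

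Your approach trades this one-line algebraic-integer trick for an explicit Galois-theoretic analysis: you show $\alpha=\pm\zeta^{\sigma}$, identify the stabilizer $H$ as a congruence subgroup modulo $n_0$, and obtain the closed form $R(X)=\omega^{r}\bigl(\Phi_m(\omega^{-1}X)-1\bigr)$ for the complementary divisor $m=n/n_0$, finally forcing $\omega\in\Q$ from the subleading coefficient of $\Phi_m$. This yields more structural information---one sees exactly what shape $R$ would have to take and why the square-free hypothesis intervenes---but it is substantially longer. Both arguments ultimately hinge on the same fact from Proposition~\ref{prop43} (the nonvanishing of the subleading coefficient in the square-free case); the paper simply exploits it at the outset rather than at the end.
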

\begin{proof}
Suppose that $\Phi_{p_1\ldots p_s}(X)=Q(X)\circ R(X)$ is a normal $(q,r)$ decomposition. From 
Proposition \ref{prop43} and Theorem \ref{thm33} we find that 
$$ R(X)=X^r+\frac{(-1)^{s+1}}{q}X^{r-1}+\ldots+c_1X, $$
where $q=\deg(Q)$ is an integer greater than 1. If $\alpha$ is one root of $Q(X)$, then all the roots 
of the polynomial $R(X)-\alpha$ are (primitive) roots of unity and their sum (an algebraic integer) 
equals $\frac{(-1)^s}{q}$, which is not an algebraic integer. 
\end{proof}
{\bf Ritt polynomials.} We denote by $R_\lambda$ and $R_\rho$ the polynomials: 
$$ R_\lambda(X)=X^sG^n(X),\, R_\rho(X)=X^sG(X^n), $$
where $G(X)$ is an $\mu$-polynomial in $K[X]$ with $\deg G\geq 2$ and $G(X)\neq X^m$.
\begin{lemma}\label{lema45}
a) If ${\rm gcd}(s,n)\geq 2$, then $R_\lambda(X)$ and $R_\rho(X)$ are decomposable.

b) If $kh=nj+s$ and $G(X)=X^jB^k(X)C[X^hB^n(X)]$, then $R_\lambda(X)=X^sG^n(X)$ and 
$R_\rho(X)=X^sG(X^n)$ are decomposable.
\end{lemma}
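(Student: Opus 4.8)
The plan is to exhibit, in each case, an explicit decomposition of $R_\lambda$ and of $R_\rho$, and then to check only that both composition factors have degree $\geq 2$; since $G$, $B$, $C$ all lie in $K[X]$, every factor produced will again lie in $K[X]$, so no field extension is involved.

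\textbf{Part a).} Write $d=\gcd(s,n)\geq 2$, $s=ds_0$, $n=dn_0$. Then
$$ R_\lambda(X)=X^{s}G^{n}(X)=\bigl(X^{s_0}G^{n_0}(X)\bigr)^{d}=X^{d}\circ\bigl(X^{s_0}G^{n_0}(X)\bigr) $$
is a $(d,\,s_0+n_0\deg G)$ decomposition of $R_\lambda$, and, substituting $X^{n}=(X^{d})^{n_0}$,
$$ R_\rho(X)=X^{s}G(X^{n})=\bigl(X^{s_0}G(X^{n_0})\bigr)\circ X^{d}. $$
In both displays the non-monomial factor has degree $s_0+n_0\deg G\geq 2$ (because $\deg G\geq 2$ and $n_0\geq 1$) while $X^{d}$ has degree $d\geq 2$, so the decompositions are genuine.

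\textbf{Part b).} I would substitute $G(X)=X^{j}B^{k}(X)\,C\bigl(X^{h}B^{n}(X)\bigr)$ into $R_\lambda(X)=X^{s}G^{n}(X)$; raising $G$ to the $n$-th power and then using $s+nj=kh$ makes the power of $X$ recombine with $B^{nk}(X)$ into a perfect $k$-th power,
$$ R_\lambda(X)=X^{s+nj}B^{nk}(X)\,C^{n}\bigl(X^{h}B^{n}(X)\bigr)=\bigl(X^{h}B^{n}(X)\bigr)^{k}C^{n}\bigl(X^{h}B^{n}(X)\bigr), $$
so with $W(X)=X^{h}B^{n}(X)$ one obtains $R_\lambda(X)=\bigl(X^{k}C^{n}(X)\bigr)\circ W(X)$. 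The parallel computation for $R_\rho(X)=X^{s}G(X^{n})$, now using $X^{kh}B^{k}(X^{n})=\bigl(X^{h}B(X^{n})\bigr)^{k}$ and $X^{nh}B^{n}(X^{n})=\bigl(X^{h}B(X^{n})\bigr)^{n}$, gives $R_\rho(X)=\bigl(X^{k}C(X^{n})\bigr)\circ V(X)$ with $V(X)=X^{h}B(X^{n})$. (Note that the outer factors $X^{k}C^{n}(X)$ and $X^{k}C(X^{n})$ are themselves Ritt polynomials of types $R_\lambda$ and $R_\rho$, now built from $C$, $k$, $n$, so the lemma has a recursive flavour.)

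The algebra above is routine; I expect the only real obstacle to be excluding degenerate degree-$1$ factors. In part a) this is automatic. In part b) one must verify, from the hypotheses actually imposed on $B$ and $C$ (they are monic and chosen so that $G\neq X^{m}$ and $\deg G\geq 2$), that $\deg W=\deg V=h+n\deg B\geq 2$ and $\deg\bigl(X^{k}C^{n}(X)\bigr)=\deg\bigl(X^{k}C(X^{n})\bigr)=k+n\deg C\geq 2$; making sure these inequalities cannot fail is where all the care is required.
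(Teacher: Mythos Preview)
Your proof is correct and follows exactly the same approach as the paper: the decompositions you display, $R_\lambda=X^d\circ(X^{s_0}G^{n_0})$, $R_\rho=(X^{s_0}G(X^{n_0}))\circ X^d$ in part~a), and $R_\lambda=(X^kC^n)\circ(X^hB^n)$, $R_\rho=(X^kC(X^n))\circ(X^hB(X^n))$ in part~b), are precisely the ones the paper writes down. The paper's proof is in fact much terser---it simply records these four formulas with no further comment---so your worry about excluding degree-$1$ factors in part~b) goes beyond what the paper verifies; the paper leaves that issue implicit.
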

\begin{proof}
a) If $s=du$ and $n=dv$, $d\geq 2$, then 
$$ R_{\lambda}(X)=X^d\circ (X^uG^v(X)) \mbox{ and }R_{\rho}(X)=(X^uG(X^v))\circ X^d. $$

b) In this case we find 
$$ R_{\lambda}(X)=[X^kC^n(X)]\circ [X^hB^n(X)]\mbox{ and }R_{\rho}(X)=[X^kC(X^n)]\circ [X^hB(X^n)]. $$
\end{proof}
To obtain indecomposable $R_\ast$ polynomials, we will take relatively prime numbers $s,n$ and 
also we can choose $1\leq s\leq n-1$: 
$$ R_{\lambda}(X)=X^{s+jn}G_1^n(X)=X^sG^n(X),\, R_\rho(X)=X^{s+jn}G_1(X^n)=X^sG(X^n), $$
where $G(X)=X^jG_1(X)$, $G_1(0)\neq 0$. 

The following Lemma will be used in the proof of Proposition \ref{prop47}.
\begin{lemma}\label{lema46}
If $A,B,C$ are $\mu$-polynomials in $K[X]$ such that $A^n(X)=B(X)C(X)$, where $B(X)$ and $C(X)$ 
are relatively prime, then there are $\mu$-polynomials $B_1(X)$ and $C_1(X)$ in $K[X]$ such that
$$ B(X)=B_1^n(X),\,C(X)=C_1^n(X)\mbox{ and }A(X)=B_1(X)C_1(X).  $$
\end{lemma}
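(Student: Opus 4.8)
The plan is to prove Lemma \ref{lema46} by a unique factorization argument on the level of irreducible polynomials in $K[X]$, using the coprimality of $B$ and $C$ to cleanly distribute the irreducible factors of $A$.

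First I would factor $A(X)$ into its distinct monic irreducible factors in $K[X]$: write $A(X)=\prod_i P_i(X)^{m_i}$ with the $P_i$ pairwise distinct, monic and irreducible. Then $A^n(X)=\prod_i P_i(X)^{nm_i}$. Since $K[X]$ is a UFD, the factorization of the product $B(X)C(X)$ must match this up to units; because $B$ and $C$ are monic this is an equality of monic factorizations. The key point is that $B$ and $C$ are relatively prime, so no irreducible $P_i$ can divide both: each $P_i$ appears with its full multiplicity $nm_i$ either entirely in $B$ or entirely in $C$. Partition the index set accordingly into $I_B$ and $I_C$, so that $B(X)=\prod_{i\in I_B}P_i(X)^{nm_i}$ and $C(X)=\prod_{i\in I_C}P_i(X)^{nm_i}$.

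Next I would simply define $B_1(X)=\prod_{i\in I_B}P_i(X)^{m_i}$ and $C_1(X)=\prod_{i\in I_C}P_i(X)^{m_i}$. These are monic polynomials in $K[X]$ by construction. One checks immediately that $B_1^n(X)=B(X)$ and $C_1^n(X)=C(X)$, and that $B_1(X)C_1(X)=\prod_i P_i(X)^{m_i}=A(X)$, which are exactly the three required identities. No field extension is needed: everything takes place inside $K[X]$ because we only regroup factors that were already there.

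The main (and essentially only) obstacle is making the distribution of irreducible factors rigorous — i.e. justifying that coprimality of $B$ and $C$ forces each irreducible factor of $A$ into exactly one of $B$, $C$ with its full exponent. This is where one must invoke that $\gcd(B,C)=1$ means no common irreducible divisor, together with uniqueness of factorization in $K[X]$; once that observation is in place, the construction of $B_1,C_1$ and the verification of the identities are purely formal. A minor point worth a sentence is that monicity of $A,B,C$ removes any ambiguity by nonzero scalars, so the regrouped products are literally equal and not merely equal up to a unit.
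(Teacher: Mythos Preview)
Your argument is correct. It is, however, a genuinely different route from the paper's. The paper first passes to the algebraic closure $\overline{K}$: over $\overline{K}$, the coprimality of $B$ and $C$ forces every root of $A^n$ to lie entirely in one of the two factors, so monic $n$-th roots $B_1,C_1\in\overline{K}[X]$ exist; the paper then invokes Theorem~\ref{thm33} (applied to the normal $(n,\deg B_1)$ decomposition $B=X^n\circ B_1$) to pull $B_1$ back into $K[X]$, and likewise for $C_1$. Your approach stays entirely inside $K[X]$ and uses only the UFD property, partitioning the monic irreducible factors of $A$ between $B$ and $C$ via $\gcd(B,C)=1$; this is more elementary and self-contained, and avoids both the field extension and the appeal to Theorem~\ref{thm33}. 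The paper's version, on the other hand, illustrates how the ``descent'' part of Theorem~\ref{thm33} can be used as a general tool, which fits the broader narrative of the paper. One small remark: since $B$ and $C$ are $\mu$-polynomials they have degree $\geq 1$, hence each of your $B_1,C_1$ also has degree $\geq 1$ and is a $\mu$-polynomial as the statement requires.
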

\begin{proof}
We can find monic polynomials $B_1,C_1$ with coefficients in $\overline{K}$, the algebraic 
closure of $K$, satisfying $B=B_1^n$ and $C=C_1^n$. From $B(X)=B_1^n(X)=X^n\circ B_1(X)$, 
$B(X)\in K[X]$, the proof of Theorem \ref{thm33} shows that $B_1(X)\in K[X]$.
\end{proof} 
\begin{prop}\label{prop47}
Let $p$ a prime number and $1\leq s<p$. If $R_\lambda(X)=X^sG^p(X)$ is decomposable, then 
there are $j,k,h\in\mathbb{N}$ and polynomials $B(X),C(X)$ in $ K[X]$ such that
$$ kh=s+jp\mbox{ and }G(X)=X^jB^k(X)C[X^hB^p(X)]. $$
The same conclusion holds if $R_\rho(X)=X^sG(X^p)$ is decomposable.
\end{prop}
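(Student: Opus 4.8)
The plan is to analyze the normal decomposition of $R_\lambda(X)=X^sG^p(X)$ directly, using Theorem~\ref{thm33} to descend to $K[X]$, and then to read off the required factorization of $G$ from the two components. First I would invoke Theorem~\ref{thm33}(a): if $R_\lambda$ is decomposable it has a normal $(q,p)$ decomposition (the ``inner'' degree must be $p$ since $p$ is prime and $n=\deg R_\lambda = s + p\deg G$ is divisible by $p$ with the complementary factor $q=\deg G + s/p$... more carefully, any nontrivial decomposition $R_\lambda = Q\circ R$ has $\deg R\cdot\deg Q = s+p\deg G$; since $p\mid\deg G^p$ and $\gcd(s,p)=1$, one checks $p\mid\deg R$, and as $\deg Q,\deg R\geq 2$ one may arrange the inner factor to carry the full prime $p$), say $R_\lambda(X)=Q(X)\circ R(X)$ with $Q,R\in K[X]$ and $R$ a $\mu\nu$-polynomial. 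Since $R_\lambda(X)=X^s G^p(X)$ has $0$ as a root, $R$ is a $\mu\nu$-polynomial of degree divisible by $p$; write $R(X)=X^a R_1(X)$ with $R_1(0)\neq 0$.

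Next I would exploit unique factorization in $K[X]$. The identity $X^s G^p(X) = Q(R(X))$ factors the left side into the coprime pieces $X^s$ and $G^p(X)$ (coprime because $G(X)\neq X^m$ and $G$ is monic, so after removing the factor $X^j$ from $G$ we may assume $G(0)\neq 0$; the case $\gcd(s,p)\geq 2$ is excluded, here $=1$). The key step is to track how the factor $X$ and the factor $G$ distribute through $Q\circ R$. Write $G(X)=X^j G_1(X)$ with $G_1(0)\neq 0$, so $R_\lambda(X)=X^{s+jp}G_1^p(X)$. Looking at the roots: the root $0$ of $R_\lambda$ comes from roots of $R$ that are roots of $Q$-preimages of $0$; more precisely $Q(R(0))=0$, and since $R(0)=0$ (as $R$ is a $\nu$-polynomial) we get $Q(0)=0$, so $X\mid Q(X)$, say $Q(X)=X^k Q_1(X)$ with $Q_1(0)$ possibly zero — then iterate, or better, separate the order of vanishing at $0$ on both sides: $\mathrm{ord}_0(R_\lambda)=s+jp$ on the left, and $=k\cdot\mathrm{ord}_0(R)=k\cdot a$ on the right where $k=\mathrm{ord}_0(Q)$ and $a=\mathrm{ord}_0(R)$. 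This yields $kh = s+jp$ with $h:=a=\mathrm{ord}_0(R)$, which is exactly the first asserted relation once we identify $B$.

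The remaining and most delicate step is to produce $B$ and $C$. The reduced part of $R(X)$, namely $R(X)/X^h = X^{?}\cdots$, should be a perfect $p$-th power times something: writing $R(X)=X^h B^p(X)\cdot(\text{unit?})$ is too optimistic directly, so instead I would apply Lemma~\ref{lema46}. From $Q(R(X))=X^{s+jp}G_1^p(X)$, strip the maximal power of $X$ from both sides; the non-$X$ part is $\big(Q(R(X))/X^{kh}\big) = G_1^p(X)$, and $Q(R(X))/X^{kh}$ factors as a product over the non-zero roots of $R$. Grouping the fibers of $R$ over roots of $Q_1$ and over $0$, one gets an expression of the form $[\text{power of the ``}B\text{-part'' of }R]^p$ times a factor coming from $Q_1$; Lemma~\ref{lema46} applied to $G_1^p = (\text{coprime product})$ then extracts monic $p$-th roots in $K[X]$, giving $B(X)$ (the ``$B$-part'' of $R$, so that $R(X)=X^h B^p(X)$ after the normalization forced by Corollary~\ref{cor34}, since $R$ is forced to be a $\delta$-type shape) and $C(X)$ (the $p$-th root of the $Q_1$-contribution), yielding $G(X)=X^j B^k(X)\,C[X^h B^p(X)]$. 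The hard part is precisely this bookkeeping: verifying that the $Q_1$-contribution, as a polynomial in $R(X)$, is coprime to the power-of-$B$ part so that Lemma~\ref{lema46} applies, and checking that the resulting $C$ depends on $X$ only through the combination $X^h B^p(X)=R(X)$. For $R_\rho(X)=X^sG(X^p)=R_\lambda(X^p)/X^{s(p-1)}$... rather, note $R_\rho(X)=X^s G(X^p)$ and a normal $(q,p)$ decomposition of $R_\rho$ has $R_\rho = Q\circ R$; substituting $X\mapsto X^p$ relates $R_\rho$ to $R_\lambda$, or one simply repeats the identical argument with the roles of the inside/outside monomial substitutions swapped, exactly as in Lemma~\ref{lema45}(b) where the two formulas for $R_\lambda$ and $R_\rho$ are mirror images; so the same $j,k,h,B,C$ work and no separate computation is needed.
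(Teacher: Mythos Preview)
Your opening move contains a genuine error. You claim that in any decomposition $R_\lambda=Q\circ R$ the inner degree is divisible by $p$, arguing that $n=\deg R_\lambda=s+p\deg G$ is divisible by $p$. But $1\le s\le p-1$ gives $n\equiv s\not\equiv 0\pmod p$, so in fact $p\nmid n$ and $p$ divides \emph{neither} $\deg Q$ nor $\deg R$. The assertion ``one checks $p\mid\deg R$'' is therefore false, and the whole framing of a ``normal $(q,p)$ decomposition'' collapses. The paper's proof makes no such assumption: it takes an arbitrary normal $(q,r)$ decomposition, writes $G=X^jG_1$, $Q=X^kQ_1$, $R=X^hR_1$ with $G_1(0),Q_1(0),R_1(0)\ne 0$, and from $X^{s+jp}G_1^p=X^{kh}R_1^k\,Q_1(X^hR_1)$ reads off $kh=s+jp$ just as you do.

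The second gap is more serious: you never explain \emph{why} $R_1$ is a $p$-th power or why $Q_1$ is a $p$-th power, and Lemma~\ref{lema46} alone does not deliver this. The paper's argument is a root-multiplicity count: from $kh=s+jp$ one gets $\gcd(k,p)=\gcd(h,p)=1$. If $\xi$ is a root of $R_1$ of multiplicity $t$, then (since $R_1$ and $Q_1(X^hR_1)$ are coprime) $\xi$ has multiplicity $tk$ in $G_1^p$, so $p\mid tk$, hence $p\mid t$; thus $R_1=B^p$. For $Q_1$ one argues by contradiction: if some root $\eta$ of $Q_1$ has multiplicity $v$ with $p\nmid v$, then every root of $X^hB^p(X)-\eta$ must have multiplicity divisible by $p$, forcing $p\mid h+p\deg B$, i.e.\ $p\mid h$, a contradiction. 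This degree argument is the crux, and your proposal (``grouping fibers'', ``Lemma~\ref{lema46} applied to $G_1^p=(\text{coprime product})$'') does not supply it.
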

\begin{proof}
From the normal $(q,r)$ decomposition $X^sG^p(X)=Q(X)\circ R(X)$, where 
$$\begin{array}{ll}
G(X)=X^m+a_{m-1}X^{m-1}+\ldots +a_jX^j=X^jG_1(X)  &  (a_j\neq 0),  \\
Q(X)=X^q+b_{q-1}X^{q-1}+\ldots +b_kX^k=X^kQ_1(X)  &  (b_k\neq 0),  \\
R(X)=X^r+c_{r-1}X^{r-1}+\ldots +c_hX^h=X^hR_1(X)  &  (c_k\neq 0),
\end{array}$$
we obtain 
$$ X^{s+jp}G_1^p(X)=X^{kh}R_1^k(X)Q_1[X^hR_1(X)], $$
hence $kh=s+jp$ because 0 is not a root of the polynomials $G_1(X)$, $R_1(X)$, $Q_1[X^hR_1(X)]$. 
This implies also that $R_1(X)$ and $Q_1[X^hR_1(X)]$ are relatively prime. Take a root $\xi$ 
of $R_1(X)$ with multiplicity $t$ ($\xi\in\overline{K}$, the algebraic closure of $K$). 
Let $u$ be the multiplicity of $\xi$ as a root of $G_1(X)$, therefore $up=tk$. 
Because $k,p$ are relatively prime, we find that  $p\mid t$. This implies that 
$R_1(X)=B^p(X)$. Let $\{\eta_1,\ldots,\eta_e\}$ be the distinct roots of $Q_1(X)$ with 
multiplicities $\{v_1,\ldots,v_e\}$. We will prove that all $v_\ast$ are multiple of $p$, 
hence $Q_1(X)=C^p(X)$ and $G_1(X)=B^k(X)C[X^hB^p(X)]$; this gives the required factorization 
of $G(X)$. Let $\{\theta_1,\ldots,\theta_f\}$ be the roots of $X^hB^p(X)-\eta_1$ with 
multiplicities $\{w_1,\ldots,w_f\}$. The root $\theta_1$ cannot be a root of another 
$X^hB^p(X)-\eta_\ast$ ($\eta_\ast\neq\eta_1$), hence the multiplicity of $\theta_1$ in 
$G_1^p(X)=B^p(X)Q_1[X^hB^p(X)]$ is $v_1w_1$ and it is also a multiple of $p$. If $v_1$ is not 
a multiple of $p$, then $p\mid w_1$: the same is true for all roots $\theta_\ast$ of 
$X^hB^p(X)-\eta_1$, hence $p\mid w_2,\ldots,p\mid w_f$ and
$$ {\rm deg}(X^hB^p(X)-\eta_1)=h+p\cdot{\rm deg}(B(X))=\sum w_\ast\equiv 0\,({\rm mod}\,p), $$
a contradiction ($kh=s+jp$ implies that $h,p$ are relatively prime). 

There are similar arguments if $R_\rho$ is decomposable. 
\end{proof}
\begin{definition}\label{def48} 
We define the \textit{Ritt polynomials} 
$$ {\bf R}_\lambda^{p,s}(X)=X^sG^p(X) \mbox{ and } {\bf R}_\rho^{p,s}(X)=X^sG(X^p), $$
where $p$ is a prime number, $1\leq s\leq p-1$ and $G(X)$ is a $\mu$-polynomial ($\deg G\geq 2$) 
which cannot be factorized into a product $G(X)=X^jB^k(X)C[X^hB^p(X)]$ with $kh=s+jp$ (in 
particular $G(X)\neq X^j$). It is obvious that ${\bf R}_\lambda^{p,s}(X)$ is an $\mu\nu$-polynomial 
and ${\bf R}_\rho^{p,s}(X)$ is an $\mu\varrho$-polynomial.
\end{definition}


\section{Ritt presentation}

\begin{lemma}\label{lema51}
a) If $P_1\circ Q=P_2\circ Q$, where $P_1,P_2,Q$ are polynomials of degrees $\geq 1$, 
then $P_1=P_2$.

b) If $P\circ Q_1=P\circ Q_2$, where $P,Q_1,Q_2$ are polynomials of degrees $\geq 1$, 
then there is a root of unity of order $\deg P$, $\omega$, and a constant $c$ such
that $Q_2=\omega Q_1+c$.
\end{lemma}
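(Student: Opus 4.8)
For part (a), the plan is to compare leading coefficients and then peel off terms. Write $P_1(X)=\sum_{i=0}^m a_iX^i$ and $P_2(X)=\sum_{i=0}^m b_iX^i$ (we may assume $\deg P_1=\deg P_2=m$, since both compositions with $Q$ have the same degree $m\cdot\deg Q$ and $\deg Q\geq 1$ forces $\deg P_1=\deg P_2$). The cleanest argument is by induction on $m$: the top coefficient of $P_1\circ Q$ is $a_m(\operatorname{lc}Q)^m$ and that of $P_2\circ Q$ is $b_m(\operatorname{lc}Q)^m$, so $a_m=b_m$; then $(P_1-a_mX^m)\circ Q=(P_2-b_mX^m)\circ Q$ — here one uses that $(P_1-a_mX^m)\circ Q=P_1\circ Q-a_m Q^m$, i.e. composition on the left is additive — and the inductive hypothesis (applied to polynomials of strictly smaller degree, the base case $m=0$ being trivial) gives $P_1-a_mX^m=P_2-b_mX^m$, hence $P_1=P_2$.

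For part (b), I would differentiate. Suppose $\deg P=m$ and $\deg Q_1=\deg Q_2=r$ (again the degree counts force this). From $P\circ Q_1=P\circ Q_2$, the chain rule gives $(P'\circ Q_1)\cdot Q_1'=(P'\circ Q_2)\cdot Q_2'$. The idea is to first normalize: by part (a) applied to a suitable setup, or directly, one may conjugate by an affine transformation so that $Q_1$ and $Q_2$ are both $\mu\nu$-polynomials (monic with zero constant term); indeed, replacing $P$ by $P\circ\gamma^{-1}$ and $Q_i$ by $\gamma\circ Q_i$ for an appropriate $\gamma\in\mathrm{Aff}(K)$ does not change the hypothesis and does not change the form of the desired conclusion. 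So assume $Q_1,Q_2$ monic of degree $r$ with zero constant term. Then argue that $Q_1=Q_2$: write $Q_2=Q_1+D$ with $\deg D<r$; substituting into $P\circ Q_1=P\circ Q_2$ and expanding $P$ around $Q_1$ via Taylor's formula gives $P\circ Q_1=P\circ Q_1+(P'\circ Q_1)D+(\text{higher order in }D)$, and the lowest-degree correction term $(P'\circ Q_1)\cdot D$ has degree $(m-1)r+\deg D$, which is strictly larger than the degree of every subsequent term only if... — this is where I expect the main friction. Actually the correct statement of (b) allows $Q_2=\omega Q_1+c$, so $Q_1$ and $Q_2$ need \emph{not} be equal even after the monic/zero-constant-term normalization if $\omega\neq 1$; the normalization only forces $c=0$ and $\omega^r\cdot(\text{leading})=(\text{leading})$, i.e. there is a genuine $\mathbb{Z}_m$-worth of freedom coming from $P(X)=\tilde P(X^{\,?})$-type symmetry. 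So the honest route is: set $H=Q_2\circ Q_1^{-1}$ over $\overline K$ (legitimate since $Q_1$, being monic of degree $r\geq 1$, is invertible as a formal power series / one works in a compositional sense by first reducing to $Q_1=X$ after composing on the right with $Q_1^{-1}$); then $P\circ H=P$ with $\deg H=1$... no, $\deg H=r$ in general.

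Let me restate the workable plan for (b): compose both sides on the \emph{right} is not available, but we \emph{can} use (a)-type uniqueness differently. From $P\circ Q_1=P\circ Q_2$ take derivatives: $(P'\circ Q_1)Q_1'=(P'\circ Q_2)Q_2'$. Over $\overline K$, factor $P'(X)=c\prod(X-\alpha_i)$. The equation says the multiset of roots (with multiplicity) of $(P'\circ Q_1)Q_1'$ equals that of $(P'\circ Q_2)Q_2'$. Comparing this with the original equation $P\circ Q_1=P\circ Q_2$ (equal multisets of roots of $P\circ Q_i-$const for each value), one deduces that $Q_1$ and $Q_2$ have the same critical points and same critical values, and a standard argument (this is essentially the uniqueness in the theory of the monodromy/Riemann existence, but over a field one can do it combinatorially) shows $Q_2=\omega Q_1+c$ with $\omega^m=1$: indeed $P\circ Q_1=P\circ Q_2$ means the two degree-$mr$ polynomials agree, and writing $\Phi=Q_2-\omega Q_1-c$ for the putative $\omega,c$ extracted from matching the top two coefficients, one shows inductively $\Phi=0$ by the same coefficient-peeling as in part (a), the root of unity $\omega$ entering precisely because the coefficient of $X^{mr-j}$ in $P\circ Q$ depends on the coefficients of $Q$ only through $(\operatorname{lc}Q)^m$ times a polynomial in the ratios, which is $\omega$-invariant. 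The main obstacle is making this last inductive peeling precise: one must verify that after matching leading data $(\operatorname{lc}Q_2)=\omega^{-1}(\operatorname{lc}Q_1)$-free choices, the remaining coefficients are forced one at a time, and that the only ambiguity is the global $\omega\in\mu_m$; I would carry this out by reducing, via an affine change, to $P$ a $\delta$-polynomial and $Q_i$ monic, where the computation is transparent and the $\mu_m$-symmetry is exactly the isotropy group $\mathbb{Z}_m$ of $X^m$ noted in Proposition~\ref{prop23}.
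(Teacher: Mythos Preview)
Your argument for (a) is correct and is exactly the paper's: the paper phrases it as ``look at the highest index where $P_1$ and $P_2$ differ'' rather than downward induction on $m$, but the computation is identical.

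For (b), after several detours (normalisation, formal inverse $Q_1^{-1}$, differentiation and root-counting) you land on the paper's own strategy --- extract $\omega$ from the leading coefficients and then peel off the remaining coefficients of $Q_2-\omega Q_1$ one by one --- but you stop there, calling the peeling ``the main obstacle'' and proposing a further reduction to $\delta$-polynomials and isotropy groups. That reduction is unnecessary; the peeling is a one-line degree count which you are missing. Write $P(X)=a_nX^n+\cdots$ with $a_n\neq 0$, and suppose $\omega^{-1}Q_2$ already agrees with $Q_1$ in all degrees $>k$ for some $k\ge 1$, say
\[
Q_1=b_qX^q+\cdots+b_{k+1}X^{k+1}+b_kX^k+\cdots,\qquad
\omega^{-1}Q_2=b_qX^q+\cdots+b_{k+1}X^{k+1}+d_kX^k+\cdots .
\]
Since $\omega^n=1$ we have $Q_2^{\,n}=(\omega^{-1}Q_2)^n$, and since $(P-a_nX^n)\circ Q_i$ has degree at most $(n-1)q<(n-1)q+k$, the coefficient of $X^{(n-1)q+k}$ in $P\circ Q_1-P\circ Q_2$ comes entirely from $a_n\bigl(Q_1^{\,n}-(\omega^{-1}Q_2)^n\bigr)$ and equals $n\,a_n\,b_q^{\,n-1}(b_k-d_k)$. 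Hence $b_k=d_k$; iterating down to $k=1$ shows $Q_2-\omega Q_1$ is constant. Two small corrections: only $\omega$ is read off at the top (from $a_nb_q^{\,n}=a_nc_q^{\,n}$), not $c$ --- the constant $c$ is simply what is left once all positive-degree coefficients have been matched; and none of the derivative or monodromy machinery you sketch is needed.
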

\begin{proof}
a) If
$$ \begin{array}{lll}
P_1(X) & = & a_nX^n+a_{n-1}X^{n-1}+\ldots +a_{k+1}X^{k+1}+a_kX^k+\ldots +a_1X+a_0, \\
P_2(X) & = & a_nX^n+a_{n-1}X^{n-1}+\ldots +a_{k+1}X^{k+1}+b_kX^k+\ldots +b_1X+b_0,
\end{array}   $$ 
the relation $P_1\circ Q=P_2\circ Q$ implies that $a_kQ^k(X)=b_kQ^k(X)$, hence $a_k=b_k$.

b) The relation $P\circ Q_1=P\circ Q_2$ and $\deg P\geq 1$ imply that $\deg Q_1=\deg Q_2$.
Consider the polynomials 
$$ \begin{array}{lll}
P(X)  & = & a_nX^n+a_{n-1}X^{n-1}+\ldots +a_0,\, (a_n\neq 0),    \\
Q_1(X) & = & b_qX^q+b_{q-1}X^{q-1}+\ldots +b_0,\, (b_q\neq 0),         \\
Q_2(X) & = & c_qX^q+c_{q-1}X^{q-1}+\ldots +c_0,\, (c_q\neq 0).      
\end{array}   $$
The leading terms in $P\circ Q_1$ and $P\circ Q_2$ imply that $a_nb_q^n=a_nc_q^n$, 
hence there is a root of unity of order $n$, $\omega$, such that $c_q=\omega b_q$.
Suppose that 
$$ Q_2(X)=\omega(b_qX^q+b_{q-1}X^{q-1}+\ldots +b_{k+1}X^{k+1}+d_kX^k+\ldots +d_0),\,
      (k\geq 1).    $$
The coefficient of $X^{(n-1)q+k}$ in $P\circ Q_1-P\circ Q_2$ is $na_nb_q^{n-1}(b_k-d_k)$,
because the degree of $(P-a_nX^n)\circ Q_1$ is at most $(n-1)q$. Therefore $Q_2-\omega Q_1$ is 
a constant polynomial.     
\end{proof}
\begin{example}
The polynomials $P(X)=X^2+2qX+r$, $Q_1(X)=X^2+aX+b$ and $Q_2(X)=-X^2-aX-b-2q$ satisfy
the relation $P\circ Q_1=P\circ Q_2$.
\end{example}
\begin{corollary}\label{cor52}
If the polynomials $P,Q,R,S$, of degrees $\geq 2$, verify the relation $P(X)\circ Q(X)=R(X)\circ S(X)$,
the following properties are equivalent:

i) there is a transformation $\alpha\in{\rm Aff}(K)$ such that $R(X)=P(X)\circ\alpha (X)$;

ii) there is a transformation $\beta\in{\rm Aff}(K)$ such that $S(X)=\beta (X)\circ Q(X)$.
\end{corollary}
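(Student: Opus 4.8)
The plan is to prove the two implications separately, in each case rewriting the relation $P(X)\circ Q(X)=R(X)\circ S(X)$ so that one factor can be cancelled by the appropriate part of Lemma \ref{lema51}.

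First I would show (i)$\Rightarrow$(ii). Assuming $R=P\circ\alpha$ for some $\alpha\in{\rm Aff}(K)$, substitute into $P\circ Q=R\circ S$ to obtain $P\circ Q=P\circ(\alpha\circ S)$. Since $\deg P\geq 2\geq 1$, Lemma \ref{lema51}b) applies with $P$ in the outer position: there is a root of unity $\omega$ of order $\deg P$ and a constant $c$ with $\alpha\circ S=\omega Q+c=(\omega X+c)\circ Q$. Composing on the left with $\alpha^{-1}$ gives $S=\bigl(\alpha^{-1}\circ(\omega X+c)\bigr)\circ Q$, so (ii) holds with $\beta=\alpha^{-1}\circ(\omega X+c)\in{\rm Aff}(K)$. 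Conversely, for (ii)$\Rightarrow$(i), assume $S=\beta\circ Q$ with $\beta\in{\rm Aff}(K)$. Substituting into $P\circ Q=R\circ S$ yields $P\circ Q=(R\circ\beta)\circ Q$. Here every polynomial involved has degree $\geq 1$, so Lemma \ref{lema51}a) gives $P=R\circ\beta$, hence $R=P\circ\beta^{-1}$ and (i) holds with $\alpha=\beta^{-1}$.

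There is no serious obstacle here; the proof is essentially an application of Lemma \ref{lema51}, and the only points to watch are minor. One is that the hypothesis $\deg Q_1=\deg Q_2$ needed for Lemma \ref{lema51}b) is automatic, since it follows from $P\circ Q_1=P\circ Q_2$ together with $\deg P\geq 1$. The other is that ${\rm Aff}(K)$ is closed under composition and inversion, so that the transformations $\beta=\alpha^{-1}\circ(\omega X+c)$ and $\alpha=\beta^{-1}$ produced above are genuine units in the monoid. The asymmetry between the two directions — a root of unity appears when cancelling $P$ on the left but not when cancelling $Q$ on the right — is exactly the asymmetry already present between parts a) and b) of Lemma \ref{lema51}, and it does not affect the equivalence being claimed.
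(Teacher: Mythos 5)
Your proof is correct and follows essentially the same route as the paper: both directions reduce to Lemma \ref{lema51}, using part b) to cancel $P$ on the left (producing the root of unity $\omega$ and constant $c$, absorbed into an affine map) and part a) to cancel $Q$ on the right. The only cosmetic difference is that the paper states the conclusion of the first direction as $Q=[\omega\cdot\alpha(X)+c]\circ S(X)$ rather than solving explicitly for $\beta=\alpha^{-1}\circ(\omega X+c)$, which is an equivalent formulation.
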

\begin{proof}
Using Lemma \ref{lema51}, from $P\circ Q=P\circ\alpha\circ S$, we find a root of unity, $\omega$,
and a constant $c$ such that 
$$ Q(X)=\omega\cdot\alpha (X)\circ S(X)+c=
[\omega\cdot\alpha (X)+c]\circ S(X). $$
From $P\circ Q=R\circ\beta\circ Q$ we get
$P(X)=R(X)\circ\beta (X)$. 
\end{proof}

In the rest of the paper the polynomials are in $\mathbb{C}\mbox{[X]}$.
We recall the fundamental results of J.~F.~Ritt  \cite{R} in the theory of the monoid  
$(\mathbb{C}[X],\circ)$. The generators of this monoid are all irreducible polynomials. The 
following generators appear in the defining relations of this monoid:
$$ \begin{array}{l}
\mathcal{M}=\{{\bf M}_p=X^p \mid p\mbox{ a prime number}\},                            \\
\mathcal{T}=\{{\bf T}_p(X)=\cos(p\arccos X) \mid p\mbox{ a prime number}\geq 3\},      \\
\mathcal{R}=\{{\bf R}_\lambda^{p,s}(X),{\bf R}_\rho(X)^{p,s}\mid p,s, G\mbox{ as in 
                   Definition \ref{def48}}\}.
.                    
\end{array} $$
These will be called the \textit{fundamental generators}.
\begin{definition}\label{def53} 
An \textit{elementary relation} in $(\mathbb{C}[X],\circ)$ is an equality
$$ P(X)\circ Q(X)=R(X)\circ S(X)  $$
where $P,Q,R,S$ are indecomposable and $R(X)\notin P(X)\cdot{\rm Aff}(K)$ (equivalently,
$S(X)\notin {\rm Aff}(K)\cdot Q(X)$).

Two elementary relations $P\circ Q=R\circ S$ and $\overline P\circ\overline Q=\overline R\circ\overline S$
are $G$-\textit{equivalent} if there are $\alpha,\beta,\gamma,\delta\in {\rm Aff}(\mathbb{C})$ such that
$$ \begin{array}{l} 
\overline P=\alpha\circ P\circ\gamma^{-1},\overline Q=\gamma\circ Q\circ \beta^{-1}, 
    \overline R=\alpha\circ R\circ\delta^{-1}, \overline S=\delta\circ S\circ\beta^{-1} \mbox{ or} \\
\overline P=\alpha\circ R\circ\gamma^{-1},\overline Q=\gamma\circ S\circ \beta^{-1}, 
    \overline R=\alpha\circ P\circ\delta^{-1}, \overline S=\delta\circ Q\circ\beta^{-1}.    
\end{array}     $$ 
\end{definition}
\begin{example}
The elementary relations ${\bf T}_p(X)\circ{\bf T}_2(X)={\bf T}_2(X)\circ{\bf T}_p(X)$ ($p$ prime $\geq 3$) 
and ${\bf R}_\lambda^{2,1}(X)\circ{\bf M}_2={\bf M}_2\circ{\bf R}_\rho^{2,1}(X)$ (where $G(X)=U_p(X)$,
see Section \ref{sectin}) are $G$-equivalent.
\end{example}
\begin{theorem}\label{thm54}(J.~F.~Ritt, 1922)
Up to a $G$-equivalence, the list of elementary relations in the monoid  $(\mathbb{C}[X],\circ)$ is:
$$ \begin{array}{l} 
{\bf M}_p\circ{\bf M}_q={\bf M}_q\circ{\bf M}_p,\mbox{ where }p\neq q\mbox{ are prime numbers};        \\
{\bf T}_p\circ{\bf T}_q={\bf T}_q\circ{\bf T}_p,\mbox{ where }p\neq q\mbox{ are prime numbers }\geq 3; \\
{\bf R}_\lambda^{p,s}\circ{\bf M}_p={\bf M}_p\circ{\bf R}_\rho^{p,s}\mbox{ where }p\mbox{ is a prime number}.
\end{array} $$ 
\begin{definition}\label{def55}
Two decompositions of a polynomial $P(X)$ with indecomposable factors
$$ P_1(X)\circ\ldots\circ P_s(X)=Q_1(X)\circ\ldots\circ Q_s(X) $$
are \textit{elementary related} if there is $i\in\{1,\ldots,s-1\}$ such that $P_j(X)=Q_j(X)$ for 
$j\neq i,i+1$ and $P_i(X)\circ P_{i+1}(X)=Q_i(X)\circ Q_{i+1}(X)$ is an elementary relation.
\end{definition}
\end{theorem}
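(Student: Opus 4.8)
This being Ritt's classical theorem, the plan is to reconstruct its modern, monodromy-theoretic proof. The \emph{easy} half, namely that each of the three displayed identities really is an elementary relation in the sense of Definition~\ref{def53}, is quick: the factors ${\bf M}_p$ and ${\bf T}_p$ have prime degree, hence are indecomposable; the factors ${\bf R}_\lambda^{p,s}$ and ${\bf R}_\rho^{p,s}$ are indecomposable by construction, via the contrapositive of Proposition~\ref{prop47}; and in each case the two sides are not conjugate by an affine map because the outer factors have different degrees. So all the content is in the converse. Suppose $P\circ Q=R\circ S$ is an elementary relation and put $n=\deg(P\circ Q)=\deg(R\circ S)$, so that $\deg P\cdot\deg Q=\deg R\cdot\deg S=n$ with all four degrees $\geq 2$. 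By Corollary~\ref{cor52} the hypothesis $R\notin P\cdot{\rm Aff}(\C)$ is equivalent to $S\notin{\rm Aff}(\C)\cdot Q$, and composing the identity on the far left and far right with suitable elements of ${\rm Aff}(\C)$ --- which moves the relation only within its $G$-equivalence class --- I may assume $f:=P\circ Q=R\circ S$ is a $\delta$-polynomial, by Proposition~\ref{prop22}.

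Next I would pass to branched covers. Regard $f\colon\PP^1(\C)\to\PP^1(\C)$ as a covering of degree $n$; since $f$ is a polynomial it is totally ramified over $\infty$, so its monodromy group $M\leq S_n$, acting on a generic fibre, is transitive and contains the $n$-cycle $\sigma$ coming from a small loop around $\infty$. A decomposition $f=A\circ B$ corresponds, up to an affine reparametrisation of the middle term, to the $M$-invariant partition of the fibre into the fibres of $B$, namely $\deg A$ blocks of size $\deg B$; the outer factor $A$ is indecomposable exactly when this partition is a \emph{maximal} proper $M$-invariant partition, and the inner factor $B$ is indecomposable exactly when it is a \emph{minimal} nontrivial one. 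Hence an elementary relation $P\circ Q=R\circ S$ is precisely the datum of two distinct, incomparable $M$-invariant partitions $\Pi_1$ (blocks of size $\deg Q$) and $\Pi_2$ (blocks of size $\deg S$), each simultaneously minimal and maximal in the lattice of $M$-invariant partitions --- the incomparability being the content of $R\notin P\cdot{\rm Aff}(\C)$.

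The core step is the group theory. The $n$-cycle $\sigma$ forces every $M$-invariant partition to be the \emph{unique} $\langle\sigma\rangle$-invariant one with its block count, so $\Pi_1$ and $\Pi_2$ correspond to two divisors of $n$ in extremal positions of the divisor lattice; and because each $\Pi_i$ is both minimal and maximal, $M$ acts on its blocks, and a block-stabiliser acts on a block, as \emph{primitive} groups, each still carrying a regular cyclic subgroup. Feeding this into the classification of primitive permutation groups with a regular cyclic subgroup --- the Burnside--Schur theorem on transitive groups of prime degree and its refinements --- at both levels of each tower, together with the constraint that a single $\sigma$ must be compatible with $\Pi_1$ and with $\Pi_2$, forces $M$ to be cyclic, dihedral, or a metabelian group with a normal regular abelian subgroup. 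In the cyclic and dihedral cases a Riemann--Hurwitz count (the only finite ramification being that inherited from $\infty$ through the two towers) and uniqueness of the resulting cover identify $P,Q,R,S$, up to pre- and post-composition with affine maps, with $X^p,X^q,X^q,X^p$ and with ${\bf T}_p,{\bf T}_q,{\bf T}_q,{\bf T}_p$ respectively. In the metabelian case the abelian normal subgroup forces one leg of each decomposition to be a power map, so the identity necessarily reads ${\bf R}_\lambda^{p,s}\circ{\bf M}_p={\bf M}_p\circ{\bf R}_\rho^{p,s}$, and the indecomposability of ${\bf R}_\lambda^{p,s}$ rules out the splitting of $G$ from Proposition~\ref{prop47}, i.e.\ $G$ is admissible for Definition~\ref{def48}. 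That the three relations are pairwise non-$G$-equivalent is then immediate from the degrees of the factors and the supports of the associated $\delta$-polynomials (Proposition~\ref{prop22}).

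The main obstacle is this classification and its aftermath: extracting from transitivity plus a full $n$-cycle plus two extremal incomparable block systems that $M$ can only be cyclic, dihedral or the metabelian type, and then converting each possibility into the explicit polynomial identity --- in particular checking that in the metabelian case one factor is forced to be ${\bf M}_p$. Ritt's original 1922 argument sidesteps the group theory by working directly with the two Riemann surfaces, comparing their branch-point configurations and deforming the branch points while tracking monodromy by continuity; that route uses more elementary tools but the branch-point bookkeeping is the same difficulty in a different guise. Either way, the routine parts --- the normalisations of the first paragraph, the Riemann--Hurwitz counts, and the final inequivalence check --- are exactly the kind of manipulation already systematised in Sections~2--4.
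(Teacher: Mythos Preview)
The paper does not prove Theorem~\ref{thm54}: it is stated as a classical result of Ritt and cited to \cite{R}, with no argument given. There is therefore no paper-proof to compare your proposal against.

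As a freestanding sketch your outline follows the standard modern (monodromy) approach and is broadly correct in structure: the translation of a bidecomposition into two incomparable block systems for the monodromy group $M\le S_n$ containing an $n$-cycle, the reduction to primitive groups containing a regular cyclic subgroup, and the trichotomy cyclic/dihedral/metabelian leading to the three families. Two caveats. First, what you have written is a plan, not a proof: the step you yourself flag as the ``main obstacle'' --- deducing from the two extremal block systems that $M$ is cyclic, dihedral, or of the special metabelian type, and then recovering the explicit polynomial identities from each case --- is precisely where all the work lies, and none of it is carried out here. Second, a small technical point: in the metabelian case you assert that ``one leg of each decomposition is forced to be a power map''; more precisely, one shows that after an affine change the inner factor of one side and the outer factor of the other are $X^p$, and that the remaining factor has the shape $X^sG(X)^p$ (resp.\ $X^sG(X^p)$) --- this requires an argument about the ramification over the finite branch points, not just the abstract group structure. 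If you intend to supply a proof the paper omits, that is where the writing must happen; if you intend merely to recall Ritt's theorem as the paper does, a citation suffices.
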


\begin{theorem}\label{thm56}(J.~F.~Ritt, 1922)
If $P_1(X)\circ\ldots\circ P_s(X)=Q_1(X)\circ\ldots\circ Q_t(X)$ are two decompositions of $F(X)$
with indecomposable factors, then $s=t$ and there are decompositions (with indecomposable factors)
$F(X)=R^{(i)}_1(X)\circ\ldots\circ R^{(i)}_s(X)$, $i=0,1,\ldots,n$, such that 
$P_j(X)=R^{(0)}_j(X)$, $Q_j(X)=R^{(n)}_j(X)$ for $j=1,\ldots,s$ and the decompositions
$R^{(i-1)}_1(X)\circ\ldots\circ R^{(i-1)}_s(X)$, $R^{(i)}_1(X)\circ\ldots\circ R^{(i)}_s(X)$ are
elementary related for any $i=1,\ldots,n$.
\end{theorem}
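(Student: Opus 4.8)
The plan is to prove the statement by induction on the degree $n=\deg F$, using Ritt's first theorem (Theorem~\ref{thm54}) as the structural input that drives each step. When $n$ is prime, $F$ is indecomposable and $s=t=1$, so there is nothing to do. For the inductive step write $A=P_2\circ\cdots\circ P_s$ and $B=Q_2\circ\cdots\circ Q_t$, so that $F=P_1\circ A=Q_1\circ B$ with $P_1,Q_1$ indecomposable, and split according to whether $Q_1$ lies in the coset $P_1\cdot{\rm Aff}(\C)$.

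If $Q_1=P_1\circ\alpha$ for some $\alpha\in{\rm Aff}(\C)$, then $P_1\circ A=P_1\circ(\alpha\circ B)$, so Lemma~\ref{lema51}(b) yields $\beta\in{\rm Aff}(\C)$ with $\alpha\circ B=\beta\circ A$; hence $A=(\beta^{-1}\circ\alpha\circ Q_2)\circ Q_3\circ\cdots\circ Q_t$ is a second decomposition of $A$ into indecomposables. Since $\deg A=n/\deg P_1<n$, the inductive hypothesis says $P_2\circ\cdots\circ P_s$ and $(\beta^{-1}\circ\alpha\circ Q_2)\circ Q_3\circ\cdots\circ Q_t$ have equal length and are joined by a chain of elementary-related decompositions of $A$; prepending the factor $P_1$ to every term of that chain (a move in positions $i,i+1$ becomes a move in positions $i+1,i+2$) and then transferring the affine factors $\alpha,\beta$ between the first two slots — which is precisely the trivial identification of decompositions built into Definition~\ref{def55} — joins $P_1\circ\cdots\circ P_s$ to $Q_1\circ\cdots\circ Q_t$ and forces $s=t$. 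I regard this affine bookkeeping as routine but genuinely fiddly, and it is exactly the place where one must be careful that ``elementary related'' is understood modulo the ${\rm Aff}(\C)$-action on the individual factors.

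The substantive case is $Q_1\notin P_1\cdot{\rm Aff}(\C)$, and here one needs a \emph{bidecomposition lemma}: from $F=P_1\circ A=Q_1\circ B$ with $P_1,Q_1$ indecomposable and inequivalent, one must extract a common right tail, i.e.\ find $D$ and factorizations $A=\widehat P\circ D$, $B=\widehat Q\circ D$ with $\deg\widehat P=\deg Q_1$ and $\deg\widehat Q=\deg P_1$, such that the bidecomposition $P_1\circ\widehat P=Q_1\circ\widehat Q$ is \emph{resolved}: refining $\widehat P$ and $\widehat Q$ to indecomposable factors, the two decompositions $(P_1,\widehat P_1,\ldots)$ and $(Q_1,\widehat Q_1,\ldots)$ are joined by a chain of elementary moves, each realizing one of the relations of Theorem~\ref{thm54} (the ${\bf M}$--${\bf M}$, ${\bf T}$--${\bf T}$ or ${\bf R}_\lambda$--${\bf R}_\rho$ shape). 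Granting this, decompose $D=D_1\circ\cdots\circ D_k$ into indecomposables; then $(P_1,\widehat P_1,\ldots,D_1,\ldots,D_k)$ and $(Q_1,\widehat Q_1,\ldots,D_1,\ldots,D_k)$ are joined, while $P_1\circ\cdots\circ P_s$ is joined to the first and $Q_1\circ\cdots\circ Q_t$ to the second by applying the inductive hypothesis to the two decompositions $P_2\circ\cdots\circ P_s=\widehat P_1\circ\cdots\circ D_k$ of the polynomial $A$ (degree $n/\deg P_1<n$) and, symmetrically, to the two decompositions of $B$. Chaining the three segments proves the step and, counting factors throughout, gives $s=t$.

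I expect the bidecomposition lemma to be the main obstacle; it is the real heart of Ritt's theorem. Proving it forces one out of purely formal composition identities: the clean route is to pass to the ramified cover $F\colon\PP^1\to\PP^1$ and translate decompositions of $F$ into chains of intermediate groups $G_1\le H\le G$, where $G$ is the monodromy group (a transitive subgroup of the symmetric group $S_n$) and $G_1$ a point stabilizer, so that decompositions into indecomposables become maximal chains in the poset $[G_1,G]$ and the theorem becomes a Jordan--H\"older-type assertion for that poset. That poset is not semimodular in general, and its failures of semimodularity are exactly the ``ambiguities'' classified by Theorem~\ref{thm54}; the bidecomposition lemma is then the statement that every such failure can be navigated by the explicit ${\bf M}$, ${\bf T}$ and ${\bf R}_{\lambda/\rho}$ reshuffles, which one checks case by case using their explicit forms together with the finiteness of the isotropy groups (Proposition~\ref{prop23}), which pins down the residual affine freedom. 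The remaining work — splicing chains, prepending and appending factors, and reconciling affine ambiguities — is then mechanical, but has to be carried out carefully so as not to violate Definition~\ref{def55}.
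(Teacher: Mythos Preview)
The paper does not prove Theorem~\ref{thm56}: it is merely \emph{stated}, attributed to Ritt's 1922 paper~\cite{R}, and then used as input for Section~6. So there is no ``paper's own proof'' to compare against; the theorem is being quoted as a classical black box.

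As for your proposal itself: it is a reasonable outline of the standard strategy, but by your own admission it is not a proof. The entire weight rests on the ``bidecomposition lemma'' --- that from $P_1\circ A=Q_1\circ B$ with $P_1,Q_1$ indecomposable and inequivalent one can extract a common right factor $D$ with $A=\widehat P\circ D$, $B=\widehat Q\circ D$ and $\deg\widehat P=\deg Q_1$, $\deg\widehat Q=\deg P_1$ --- and you do not prove it; you gesture toward monodromy groups and maximal chains in $[G_1,G]$ and then say the remaining work is ``mechanical''. That lemma \emph{is} Ritt's theorem: the existence of the common right tail with those specific degrees is not formal, and the Jordan--H\"older analogy is misleading precisely because the relevant lattice of intermediate fields/groups need not be modular. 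Ritt's original argument (and the modern accounts) require a genuine analysis of the monodromy over the branch points to establish exactly this step. So what you have written is an accurate road map, but the hard part --- the part the paper simply imports from~\cite{R} --- is left as an expectation rather than an argument.
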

These two Theorems of J.~F.~Ritt give a presentation of the monoid $(\mathbb{C}[X],\circ)$.

\section{A complete presentation}

We follow G. Bergman paper \cite{B}: a \textit{complete presentation} is a presentation in which there
is a total order on the set of generators; this order is extended lexicographically on the set of 
words. A relation $W_1=W_2$ in which $W_1>W_2$ becomes $W_1\succ W_2$: any word containing $W_1$ as a 
sub-word, $W=AW_1B$, should be replaced by $W'=AW_2B$. The presentation is \textit{complete} if, 
starting with an arbitrary word and applying changes $W_1\succ W_2$ given by the presentation, we 
obtain, after finally many steps, a word which cannot be 'reduced'; this final result does not depend 
on the order of the reductions $W_1\succ W_2$ (we say that all \textit{ambiguities are solvable}). 

In the monoid $(\mathbb{C}[X],\circ)$ it will be sufficient to order the fundamental generators: 
we say that ${\bf A}(X)>{\bf B}(X)$ if ${\rm deg}{\bf A}(X)>{\rm deg}{\bf B}(X)$. The Ritt elementary
relations should be written as follows:
$$ \begin{array}{l} 
{\bf M}_p\circ{\bf M}_q\succ{\bf M}_q\circ{\bf M}_p,\mbox{ where }p>q\mbox{ are prime numbers};       \\
{\bf T}_p\circ{\bf T}_q\succ{\bf T}_q\circ{\bf T}_p,\mbox{ where }p>q\geq 3\mbox{ are prime numbers}; \\
{\bf R}_\lambda^{p,s}\circ{\bf M}_p\succ{\bf M}_p\circ{\bf R}_\rho^{p,s}\mbox{ where }p\mbox{ is a prime number}.
\end{array} $$
\begin{remark}
The polynomials ${\bf M}_2$ and ${\bf T}_2$ are in the same $G$-orbit. The
relation ${\bf T}_p\circ{\bf T}_2\succ{\bf T}_2\circ{\bf T}_p$ is a particular case of the general
relation  ${\bf R}_\lambda^{2,1}\circ{\bf M}_2\succ{\bf M}_2\circ{\bf R}_\rho^{2,1}$.
\end{remark} 
The ambiguities are, up to a $G$-equivalence, the following:
$$ \begin{array}{ll} 
a) & {\bf M}_p\circ{\bf M}_q\circ{\bf M}_r,\mbox{ where }p>q>r\mbox{ are prime numbers};        \\
b) & {\bf T}_p\circ{\bf T}_q\circ{\bf T}_r,\mbox{ where }p>q>r\geq 3\mbox{ are prime numbers};  \\
c) & {\bf R}_\lambda^{p,s}\circ{\bf M}_p\circ{\bf M}_q,\mbox{ where }p>q\mbox{ are prime numbers}.
\end{array} $$
In the following computations we underline the sub-word to be replaced. The first two ambiguities 
are solvable:
$$ \begin{array}{l} 
\underline{(\alpha_1{\bf M}_p\alpha_2^{-1})\circ(\alpha_2{\bf M}_q\alpha_3^{-1})}\circ(\alpha_3
                                                                             {\bf M}_r\alpha_4^{-1})\succ \\
\quad\quad\succ(\alpha_1{\bf M}_q\beta_1^{-1})\circ\underline{(\beta_1{\bf M}_p\alpha_3^{-1})\circ
                                                                   (\alpha_3{\bf M}_r\alpha_4^{-1})}\succ \\
\quad\quad\succ\underline{(\alpha_1{\bf M}_q\beta_1^{-1})\circ(\beta_1{\bf M}_r\beta_2^{-1})}\circ
                                                                     (\beta_2{\bf M}_p\alpha_4^{-1})\succ \\
\quad\quad\succ(\alpha_1{\bf M}_r\beta_3^{-1})\circ(\beta_3{\bf M}_q\beta_2^{-1})\circ(\beta_2
                                                                  {\bf M}_p\alpha_4^{-1})\mbox{ and }     \\ 
(\alpha_1{\bf M}_p\alpha_2^{-1})\circ\underline{(\alpha_2{\bf M}_q\alpha_3^{-1})\circ(\alpha_3
                                                                            {\bf M}_r\alpha_4^{-1})}\succ \\
\quad\quad\succ\underline{(\alpha_1{\bf M}_p\alpha_2^{-1})\circ(\alpha_2{\bf M}_r\gamma_1^{-1})}\circ
                                                                    (\gamma_1{\bf M}_q\alpha_4^{-1})\succ \\
\quad\quad\succ(\alpha_1{\bf M}_r\gamma_2^{-1})\circ\underline{(\gamma_2{\bf M}_p\gamma_1^{-1})\circ
                                                                   (\gamma_1{\bf M}_q\alpha_4^{-1})}\succ \\
\quad\quad\succ(\alpha_1{\bf M}_r\gamma_2^{-1})\circ(\gamma_2{\bf M}_q\gamma_3^{-1})\circ(\gamma_3
                                                                                 {\bf M}_p\alpha_4^{-1})                                                                                        
\end{array} $$
and the two results coincide. Similarly, the second ambiguity is solvable. The third ambiguity is 
solvable when ${\bf R}_\lambda^{p,s}=X^sG^{pq}(X)$: with the local notation 
$$ {\bf R}_\lambda^{pq,s}=X^sG^{pq}(X),{\bf R}_\rho^{pq,s}=X^sG(X^{pq}),{\bf R}_\mu^{pq,s}=X^sG^q(X^p),  
            {\bf R}_\nu^{pq,s}=X^sG^p(X^q)              $$
(where $p>q$ are prime numbers), we get two $G$-equivalent polynomials            
$$ \begin{array}{l} 
\underline{(\alpha_1{\bf R}_\lambda^{pq,s}\alpha_2^{-1})\circ(\alpha_2{\bf M}_p\alpha_3^{-1})}\circ(\alpha_3
                                                                                {\bf M}_q\alpha_4^{-1})\succ \\
\quad\quad\succ(\alpha_1{\bf M}_p\beta_1^{-1})\circ\underline{(\beta_1{\bf R}_\mu^{pq,s}\alpha_3^{-1})
                                                                 \circ(\alpha_3{\bf M}_q\alpha_4^{-1})}\succ \\
\quad\quad\succ\underline{(\alpha_1{\bf M}_p\beta_1^{-1})\circ(\beta_1{\bf M}_q\beta_2^{-1})}\circ(\beta_2
                                                                      {\bf R}_\rho^{pq,s}\alpha_4^{-1})\succ \\
\quad\quad\succ(\alpha_1{\bf M}_q\beta_3^{-1})\circ(\beta_3{\bf M}_p\beta_2^{-1})
                                                 \circ(\beta_2{\bf R}_\rho^{pq,s}\alpha_4^{-1})\mbox{ and }  \\ 
(\alpha_1{\bf R}_\lambda^{pq,s}\alpha_2^{-1})\circ\underline{(\alpha_2{\bf M}_p\alpha_3^{-1})\circ(\alpha_3
                                                                               {\bf M}_q\alpha_4^{-1})}\succ \\
\quad\quad\succ\underline{(\alpha_1{\bf R}_\lambda^{pq,s}\alpha_2^{-1})\circ(\alpha_2{\bf M}_q
                                                   \gamma_1^{-1})}\circ(\gamma_1{\bf M}_p\alpha_4^{-1})\succ \\
\quad\quad\succ(\alpha_1{\bf M}_q\gamma_2^{-1})\circ\underline{(\gamma_2{\bf R}_\nu^{pq,s}\gamma_1^{-1})\circ
                                                                      (\gamma_1{\bf M}_p\alpha_4^{-1})}\succ \\
\quad\quad\succ(\alpha_1{\bf M}_q\gamma_2^{-1})\circ(\gamma_2{\bf M}_p\gamma_3^{-1})\circ
                                                                    (\gamma_3{\bf R}_\rho^{pq,s}\alpha_4^{-1}).                                                                                        
\end{array} $$            
In the general case we have
$$ \begin{array}{l} 
\underline{(\alpha_1{\bf R}_\lambda^{p,s}\alpha_2^{-1})\circ(\alpha_2{\bf M}_p\alpha_3^{-1})}\circ(\alpha_3
                                                                                {\bf M}_q\alpha_4^{-1})\succ \\
\quad\quad\succ(\alpha_1{\bf M}_p\beta_1^{-1})\circ(\beta_1{\bf R}_\rho^{p,s}\alpha_3^{-1})
                                                          \circ(\alpha_3{\bf M}_q\alpha_4^{-1})\mbox{ and }  \\ 
(\alpha_1{\bf R}_\lambda^{p,s}\alpha_2^{-1})\circ\underline{(\alpha_2{\bf M}_p\alpha_3^{-1})\circ(\alpha_3
                                                                               {\bf M}_q\alpha_4^{-1})}\succ \\
\quad\quad\succ(\alpha_1{\bf R}_\lambda^{p,s}\alpha_2^{-1})\circ(\alpha_2{\bf M}_q
                                                        \gamma_1^{-1})\circ(\gamma_1{\bf M}_p\alpha_4^{-1}), \\
\end{array} $$   
hence we obtain a new relation (where $p>q$)
$$ (\alpha_1{\bf R}_\lambda^{p,s}\alpha_2^{-1})\circ(\alpha_2{\bf M}_q\gamma_1^{-1})\circ(\gamma_1{\bf M}_p
    \alpha_4^{-1})\succ(\alpha_1{\bf M}_p\beta_1^{-1})\circ(\beta_1{\bf R}_\rho^{p,s}\alpha_3^{-1})
                                                              \circ(\alpha_3{\bf M}_q\alpha_4^{-1}).     $$
The new ambiguity $ (\alpha_1{\bf R}_\lambda^{p,s}\alpha_2^{-1})\circ(\alpha_2{\bf M}_q\alpha_3^{-1})\circ
(\alpha_3{\bf M}_p\alpha_4^{-1})\circ(\alpha_4{\bf M}_r\alpha_5^{-1})$, where $p>q,r$, will give a new
relation (here $t={\rm min}(q,r)$ and $u={\rm max}(q,r)$)
$$ \begin{array}{l}
(\alpha_1{\bf R}_\lambda^{p,s}\alpha_2^{-1})\circ(\alpha_2{\bf M}_t\alpha_3^{-1})\circ(\alpha_3{\bf M}_u
                                       \alpha_4^{-1})\circ(\alpha_4{\bf M}_p\alpha_5^{-1})\succ  \\
    \quad\quad\succ(\alpha_1{\bf M}_p\beta_1^{-1})\circ(\beta_1{\bf R}_\rho^{p,s}\beta_2^{-1})\circ(\beta_2
                                                {\bf M}_t\beta_3^{-1})\circ(\beta_3{\bf M}_u\alpha_5^{-1}).    
\end{array} $$
New ambiguities (and new relations) appear.  \\
\emph{Proof of Theorem 1.3}. Suppose that all relations are of the form 
(here we take prime numbers $p>q_s\geq q_{s-1}\geq\ldots\geq q_1$)
$$ \begin{array}{l}
(\alpha_1{\bf R}_\lambda^{p,s}\beta_1^{-1})\circ(\beta_1{\bf M}_{q_1}\beta_2^{-1})\circ\ldots\circ
                    (\beta_s{\bf M}_{q_s}\beta_{s+1}^{-1})\circ(\beta_{s+1}{\bf M}_p\alpha_2^{-1})\succ  \\
    \quad\quad\succ(\alpha_1{\bf M}_p\gamma_1^{-1})\circ(\gamma_1{\bf R}_\rho^{p,s}\gamma_2^{-1})\circ
    (\gamma_2{\bf M}_{q_1}\gamma_3^{-1})\circ\ldots\circ(\gamma_{s+1}{\bf M}_{q_s}\alpha_2^{-1}).    
\end{array} $$
Now the ambiguity ($p>q_s\geq q_{s-1}\geq\ldots\geq q_{i+1}>r\geq q_i\geq\ldots\geq q_1$)
$$ (\alpha_1{\bf R}_\lambda^{p,s}\beta_1^{-1})\circ(\beta_1{\bf M}_{q_1}\beta_2^{-1})\circ\ldots\circ
   (\beta_s{\bf M}_{q_s}\beta_{s+1}^{-1})\circ(\beta_{s+1}{\bf M}_p\alpha_2^{-1})\circ(\alpha_2
                                                                              {\bf M}_r\alpha_3^{-1}) $$
is solvable:
$$ \begin{array}{l}
\underline{(\alpha_1{\bf R}_\lambda^{p,s}\beta_1^{-1})\circ(\beta_1{\bf M}_{q_1}\beta_2^{-1})\circ\ldots
   \circ(\beta_s{\bf M}_{q_s}\beta_{s+1}^{-1})\circ(\beta_{s+1}{\bf M}_p\alpha_2^{-1})}\circ(\alpha_2
                                                                          {\bf M}_r\alpha_3^{-1})\succ   \\ 
 \quad\succ(\alpha_1{\bf M}_p\gamma_1^{-1})\circ(\gamma_1{\bf R}_\rho^{p,s}\gamma_2^{-1})\circ
    (\gamma_2{\bf M}_{q_1}\gamma_3^{-1})\circ\ldots\circ\underline{(\gamma_{s+1}{\bf M}_{q_s}\alpha_2^{-1})
                                                           \circ(\alpha_2{\bf M}_r\alpha_3^{-1})}\succ   \\ 
 \quad\succ(\alpha_1{\bf M}_p\gamma_1^{-1})\circ(\gamma_1{\bf R}_\rho^{p,s}\gamma_2^{-1})\circ
    (\gamma_2{\bf M}_{q_1}\gamma_3^{-1})\circ\ldots\underline{\circ(\gamma_{s+1}{\bf M}_r\delta_s^{-1})}
                                                        \circ(\delta_s{\bf M}_{q_s}\alpha_3^{-1})\succ   \\                                                           
 \quad\succ\ldots\succ(\alpha_1{\bf M}_p\gamma_1^{-1})\circ(\gamma_1{\bf R}_\rho^{p,s}\gamma_2^{-1})\circ
                                                  (\gamma_2{\bf M}_{q_1}\gamma_3^{-1})\circ\ldots\circ   \\                                                           
 \quad\quad\quad\quad\circ (\gamma_{i+1}{\bf M}_{q_i}\gamma_{i+2}^{-1})\circ 
 (\gamma_{i+2}{\bf M}_r\delta_{i+1}^{-1})\circ(\delta_{i+1}{\bf M}_{q_{i+1}}\delta_{i+2}^{-1})\circ
                                            \ldots\circ(\delta_s{\bf M}_{q_s}\alpha_3^{-1})\mbox{ and }  \\
(\alpha_1{\bf R}_\lambda^{p,s}\beta_1^{-1})\circ(\beta_1{\bf M}_{q_1}\beta_2^{-1})\circ\ldots\circ
   (\beta_s{\bf M}_{q_s}\beta_{s+1}^{-1})\circ\underline{(\beta_{s+1}{\bf M}_p\alpha_2^{-1})\circ(\alpha_2
                                                                         {\bf M}_r\alpha_3^{-1})}\succ   \\
 \quad\succ(\alpha_1{\bf R}_\lambda^{p,s}\beta_1^{-1})\circ\ldots\circ
   \underline{(\beta_s{\bf M}_{q_s}\beta_{s+1}^{-1})\circ(\beta_{s+1}{\bf M}_r\eta_{s+1}^{-1})}\circ
                                                               (\eta_{s+1}{\bf M}_p\alpha_3^{-1})\succ   \\                                                                            
 \quad\succ\ldots\succ\underline{(\alpha_1{\bf R}_\lambda^{p,s}\beta_1^{-1})\circ\ldots\circ   
 (\beta_i{\bf M}_{q_i}\beta_{i+1}^{-1})\circ(\beta_{i+1}{\bf M}_r\eta_i^{-1})\circ(\eta_i{\bf M}_{q_i}
                                                                                \eta_{i+1}^{-1})\circ}   \\                                                                            
 \quad\quad\quad\quad\underline{\circ\ldots\circ(\eta_s{\bf M}_{q_s}\eta_{s+1}^{-1})\circ(\eta_{s+1}{\bf M}_p
                                                                                  \alpha_3^{-1})}\succ   \\                                                                                                                                                    
 \quad\succ(\alpha_1{\bf M}_p\eta_1^{-1})\circ(\eta_1{\bf R}_\rho^{p,s}\beta_1^{-1})\circ(\beta_1
                                                            {\bf M}_{q_1}\beta_2^{-1})\circ\ldots\circ   \\                                                                                                                                                    
 \quad\quad\quad\quad\circ(\beta_i{\bf M}_{q_i}\beta_{i+1}^{-1})\circ(\beta_{i+1}{\bf M}_r\eta_i^{-1})\circ
 (\eta_i{\bf M}_{q_{i+1}}\eta_{i+1})\circ\ldots\circ(\eta_s{\bf M}_{q_s}\alpha_3^{-1}).    
\end{array} $$ 
\hfill $\square$



\section{Appendix}

{\bf Decomposable polynomials of degree at most 12.} We classify decomposable $\delta$-polynomials of degree 
$d\leq 12$ and we give al their $\mu\nu\delta$-decompositions. We use notation 
$\mathcal{D}^{q,r\mid s,t;\Delta}=\mathcal{D}^{q,r;\Delta}\cap \mathcal{D}^{s,t;\Delta}$ for a double 
decomposition; lower indices, if any, stand for the parameters in a given family of $G$ orbits.  
\begin{center}
\begin{tabular}{|c|l |}
\hline
\hline
         & \quad \quad \quad \quad \quad \quad \quad \quad \quad \quad  Table 1: $ d=4 $ \quad \quad \, 
           \quad\quad \quad \quad \quad \quad \quad \quad \quad \quad \quad \quad \quad \quad \quad \quad \\
\hline
\hline 
$e$      & $ P(X)=X^4+4X^2+\rho X $                                                                       \\  
\hline
$0$      & $ \mathcal{D}^{2,2;4}:\, X^4=X^2\circ X^2 $                                                    \\
\hline  
$2$      & $ \mathcal{D}^{2,2;2}:\, X^4+4X^2=(X^2+4X)\circ X^2 $                                          \\
\hline  
\hline
\end{tabular}
\end{center}
\begin{center}
\begin{tabular}{|c|l |}
\hline
\hline
         & \quad \quad \quad \quad \quad \quad \quad \quad \quad \quad  Table 2: $ d=6 $                    \\
\hline
\hline
$e$   & $ P(X)=X^6+6X^4+\eta X^3+\varepsilon X^2+\rho X $                                                   \\
\hline
$0$   & $ \mathcal{D}^{2,3|3,2;6}:\, X^6=X^2\circ X^3=X^3\circ X^2 $                                        \\
\hline
$2$   & $ \mathcal{D}^{3,2;4}:\, X^6+6X^2=(X^3+6X)\circ X^2 $                                               \\
\hline 
$3$   & $ \mathcal{D}^{2,3;3}:\, X^6+6X^3=(X^2+6X)\circ X^3 $                                               \\
\hline
$4$   & $ \mathcal{D}^{2,3;2}_{\eta}:\, X^6+6X^4+\eta X^3+9X^2+3\eta X=(X^2+\eta X)\circ (X^3+3X) $         \\  
      & $ \mathcal{D}^{3,2;2}_{\varepsilon}:\, X^6+6X^4+\varepsilon X^2=(X^3+6X^2+\varepsilon X)\circ X^2 $ \\
      & $ \mathcal{D}^{2,3\mid 3,2;2}:\,X^6+6X^4+9X^2=X^2\circ (X^3+3X)=(X^3+6X^2+9X)\circ X^2 $\quad\quad  \\
\hline
\hline
\end{tabular}
\end{center}
\begin{center}
\begin{tabular}{|c|l |}
\hline
\hline
         & \quad \quad \quad \quad \quad \quad \quad \quad \quad \quad Table 3: $d=8$                      \\
\hline
\hline         
$e$  & $ P(X)=X^8+8X^6+\tau X^5+\theta X^4+\eta X^3+\varepsilon X^2+\rho X $                               \\    
\hline
$0$  & $ \mathcal{D}^{2,2,2;8}:\, X^8=X^2\circ X^2\circ X^2 $                                              \\
\hline
$2$  & $ \mathcal{D}^{4,2;6}:\,X^8+8X^2=(X^4+4X)\circ X^2  $                                               \\
\hline
$4$  & $ \mathcal{D}^{4,2;4}_{\varepsilon}:\, X^8+8X^4+\varepsilon X^2=(X^4+8X^2+\varepsilon X)\circ X^2 $ \\
     & $ \mathcal{D}^{2,2,2;4}:\, X^8+8X^4=(X^2+8X)\circ X^2\circ X^2 $                                    \\
\hline
$5$  & $ \mathcal{D}^{2,4;3}_{\theta}:\, X^8+8X^5+\theta X^4+16X^2+4\theta X=(X^2+\theta X)\circ(X^4+4X) $ \\
\hline
$6$  & $ \mathcal{D}^{2,4;2}_{\tau,\theta}:\, X^8+8X^6+\tau X^5+\theta X^4+4\tau X^3+(\frac{1}{4}\tau^2+
                           4\theta -64)X^2+(\frac{1}{2}\tau\theta-8\tau)X= $                               \\
     & \quad \quad \quad \quad \quad  $ =[X^2+(\theta-16)X]\circ (X^4+4X^2+\frac{1}{2}\tau X) $             \\                              
     & $ \mathcal{D}^{4,2;2}_{\theta,\varepsilon}:\, X^8+8X^6+\theta X^4+\varepsilon X^2=(X^4+8X^3+
                                         \theta X^2+\varepsilon X)\circ X^2 $                              \\
     & $ \mathcal{D}^{2,2,2;2}_{\theta}:\, X^8+8X^6+\theta X^4+(4\theta-64)X^2= $                          \\  
     & \quad \quad \quad \quad \quad $ =[X^2+(\theta-16)X]\circ (X^2+4X)\circ X^2 $                        \\
\hline
\hline
\end{tabular}
\end{center}
\begin{center}
\begin{tabular}{|c|l |}
\hline
\hline
        &  \quad \quad \quad \quad \quad \quad \quad \quad \quad \quad Table 5: $d=10$                    \\
\hline
\hline     
$e$     & $ P(X)=X^{10}+10X^8+\sigma X^7+\nu X^6+\tau X^5+\theta X^4+\eta X^3+\varepsilon X^2+\rho X $    \\ 
\hline
$0$     & $ \mathcal{D}^{2,5\mid 5,2;10}:\, X^{10}=X^2\circ X^5=X^5\circ X^2 $                                     \\
\hline
$2$     & $ \mathcal{D}^{5,2;8}:\, X^{10}+10X^2=(X^5+10X)\circ X^2 $                                               \\
\hline
$4$     & $ \mathcal{D}^{5,2;6}_{\varepsilon}:\, X^{10}+10X^4+\varepsilon X^2=(X^5+10X^2+\varepsilon X)\circ X^2 $ \\
\hline
$5$     & $ \mathcal{D}^{2,5;5}:\, X^{10}+10X^5=(X^2+10X)\circ X^5 $                                               \\
\hline
$6$     & $ \mathcal{D}^{2,5;4}_{\tau}:\, X^{10}+10X^6+\tau X^5+25X^2+5\tau X=(X^2+\tau X)\circ (X^5+5X) $         \\
        & $ \mathcal{D}^{5,2;4}_{\theta,\varepsilon}:\, X^{10}+10X^6+\theta X^4+\varepsilon X^2=(X^5+10X^3+
                                                          \theta X^2+\varepsilon X)\circ X^2 $                     \\
        & $ \mathcal{D}^{2,5\mid 5,2;4}:\, X^{10}+10X^6+25X^2=X^2\circ (X^5+5X)=(X^5+10X^3+25X)\circ X^2 $         \\
\hline
$7$     & $ \mathcal{D}^{2,5;3}_{\nu,\tau}:\, X^{10}+10X^7+\nu X^6+\tau X^5+25X^4+5\nu X^3+(\frac{1}{4}\nu^2+5\tau)X^2+
                                                                 \frac{1}{2}\nu\tau X= $                           \\
        & \quad \quad \quad \quad \quad  $ = (X^2+\tau X)\circ (X^5+5X^2+\frac{1}{2}\nu X) $                       \\
\hline
$8$     & $ \mathcal{D}^{2,5;2}_{\sigma,\nu,\tau}:\, X^{10}+10X^8+\sigma X^7+\nu X^6+\tau X^5+(\frac{1}{4}\sigma^2+
                                                 5\nu-125)X^4+$ \\
        & \quad $ +(\frac{1}{2}\sigma\nu-\frac{75}{2}\sigma+5\tau)X^3+(\frac{1}{4}\nu^2+
              \frac{1}{2}\sigma\tau-\frac{5}{2}\sigma^2-\frac{25}{2}\nu+\frac{625}{4})X^2+    $                    \\
        & \quad $   +(\frac{1}{2}\nu\tau-\frac{5}{2}\sigma\tau+\frac{125}{2}\sigma-\frac{25}{2}\tau)X= $           \\       
        & \quad \quad \quad \quad \quad $ =[X^2+(\tau-5\sigma)X]\circ [X^5+5X^3+
                                     \frac{1}{2}\sigma X^2+(\frac{1}{2}\nu-\frac{25}{2})X] $                      \\
        & $ \mathcal{D}^{5,2;2}_{\nu,\theta,\varepsilon}:\, X^{10}+10X^8+\nu X^6+\theta X^4+\varepsilon X^2= $     \\                                         
        & \quad \quad \quad \quad \quad  $ =(X^5+10X^4+\nu X^3+\theta X^2+
                                                                       \varepsilon X)\circ X^2 $                   \\                                         
        & $ \mathcal{D}^{2,5\mid 5,2;2}_{\nu}:\, X^{10}+10X^8+\nu X^6+(5\nu-125)X^4+(\frac{1}{4}\nu^2-\frac{25}{2}\nu+
                                                                          \frac{625}{4})X^2= $                     \\
        & \quad \quad \quad \quad \quad  $ =X^2\circ [X^5+5X^3+(\frac{1}{2}\nu-\frac{25}{2})X]= $                  \\
        & \quad \quad \quad \quad \quad  $=[X^5+10X^4+\nu X^3+(5\nu-125)X^2+
                                 (\frac{1}{4}\nu^2-\frac{25}{2}\nu+\frac{625}{4}) X]\circ X^2 $                    \\                                                                                                          
\hline
\hline
\end{tabular}
\end{center}
\begin{center}
\begin{tabular}{|c|l |}
\hline
\hline
        &  \quad \quad \quad \quad \quad \quad \quad \quad \quad \quad Table 4: $d=9$                     \\
\hline
\hline        
$e$  & $  P(X)=X^9+9X^7+\nu X^6+\tau X^5+\theta X^4+\eta X^3+\varepsilon X^2+\rho X $                   \\
\hline          
$0$     & $ \mathcal{D}^{3,3;9}:\, X^9=X^3\circ X^3 $                                                                 \\
\hline  
$3$     & $ \mathcal{D}^{3,3;6}:\, X^9+9X^3=(X^3+9X)\circ X^3 $                                                       \\
\hline
$6$     & $ \mathcal{D}^{3,3;3}_{\eta}:\, X^9+9X^6+\eta X^3=(X^3+9X^2+\eta X)\circ X^3 $                              \\
\hline
$7$     & $ \mathcal{D}^{3,3;2}_{\nu,\eta}:\, X^9+9X^7+\nu X^6+27X^5+6\nu X^4+\eta X^3+9\nu X^2+(3\eta-81)X=$ 
                                                                                       \quad \quad \quad  \\
        & \quad \quad \quad \quad \quad  $ =[X^3+\nu X^2+(\eta-27)X]\circ (X^3+3X) $                      \\
\hline
\hline
\end{tabular}
\end{center}
\begin{center}
\begin{tabular}{|c|l |}
\hline
\hline
      &  \quad \quad \quad \quad \quad \quad \quad \quad \quad \quad Table 6: $d=12$                               \\
\hline
\hline     
$e$   & $ P(X)=X^{12}+10X^{10}+\lambda X^8+\sigma X^7+\nu X^6+\tau X^5+\theta X^4+\eta X^3+
                                                                                 \varepsilon X^2+\rho X $           \\
\hline                                                                                              
$0$   & $ \mathcal{D}^{2,2,3\mid 2,3,2\mid 3,2,2;12}:\, X^{12}=X^2\circ X^2\circ X^3=X^2\circ X^3\circ X^2=
                                                                                 X^3\circ X^2\circ X^2 $            \\       
\hline
$2$   & $ \mathcal{D}^{6,2;10}:\, X^{12}+12X^2=(X^6+12X)\circ X^2 $                                                 \\
\hline 
$3$   & $ \mathcal{D}^{4,3;9}:\, X^{12}+12X^3=(X^4+12X)\circ X^3 $                                                  \\
\hline
$4$   & $ \mathcal{D}^{6,2;8}_{\varepsilon}:\, X^{12}+12X^4+\varepsilon X^2=(X^6+12X^2+\varepsilon X)\circ X^2 $    \\
      & $ \mathcal{D}^{3,2,2;8}:\, X^{12}+12X^4=(X^3+12X)\circ X^2\circ X^2 $                                       \\
\hline
$6$   & $ \mathcal{D}^{4,3;6}_{\eta}:\, X^{12}+12X^6+\eta X^3=(X^4+12X^2+\eta X)\circ X^3 $                         \\
      & $ \mathcal{D}^{6,2;6}_{\theta,\varepsilon}:\, X^{12}+12X^6+\theta X^4+\varepsilon X^2=(X^6+12X^3+
                                                                  \theta X^2+\varepsilon X)\circ X^2 $              \\
      & $ \mathcal{D}^{2,2,3\mid 2,3,2;6}:\, X^{12}+12X^6=(X^2+12X)\circ X^2\circ X^3=(X^2+12X)\circ X^3\circ X^2 $ \\
\hline
$7$   & $ \mathcal{D}^{2,6;5}_{\nu}:\, X^{12}+12X^7+\nu X^6+36X^2+6\nu X=(X^2+\nu X)\circ (X^6+6X) $                \\
\hline
$8$   & $ \mathcal{D}^{2,6;4}_{\sigma,\nu}:\, X^{12}+12X^8+\sigma X^7+\nu X^6+36X^4+6\sigma X^3+(\frac{1}{4}\sigma^2
                                           +6\nu)X^2+\frac{1}{2}\sigma\nu X= $                                      \\
      & \quad \quad \quad \quad \quad  $ =(X^2+\nu X)\circ (X^6+6X^2+\frac{1}{2}\sigma X) $                         \\
      & $ \mathcal{D}^{6,2;4}_{\nu,\theta,\varepsilon}:\, X^{12}+12X^8+\nu X^6+\theta X^4+\varepsilon X^2= $        \\
      & \quad \quad \quad \quad \quad  $ =(X^6+12X^4+\nu X^3+\theta X^2+\varepsilon X)\circ X^2 $       \\
      & $ \mathcal{D}^{2,3,2;4}_{\nu}:\, X^{12}+12X^8+\nu X^6+36X^4+6\nu X^2=(X^2+\nu X)\circ (X^3+6X)\circ X^2 $   \\
      & $ \mathcal{D}^{3,2,2;4}_{\theta}:\, X^{12}+12X^8+\theta X^4=(X^3+12X^2+\theta X)\circ X^2\circ X^2  $       \\                   
      & $ \mathcal{D}^{2,3,2\mid 3,2,2;4}:\, X^{12}+12X^8+36X^4= $   \\
      & \quad \quad \quad \quad \quad  $=X^2\circ (X^3+6X)\circ X^2=(X^3+12X^2+36X)\circ X^2\circ X^2$              \\
\hline
$9$   & $ \mathcal{D}^{2,6;3}_{\mu,\sigma,\nu}:\, X^{12}+12X^9+\mu X^8+\sigma X^7+\nu X^6+6\mu X^5+(\frac{1}{4}\mu^2+
                                                                             6\sigma)X^4+ $                         \\
      & \quad $ +(\frac{\mu\sigma}{2}+6\nu -216)X^3+(\frac{1}{2}\mu\nu+\frac{1}{4}\sigma^2-18\mu)X^2+
                                                                    (\frac{1}{2}\sigma\nu-18\sigma)X= $             \\
      & \quad\quad\quad\quad\quad $ =[X^2+(\nu-36)X]\circ (X^6+6X^3+\frac{1}{2}\mu X^2+\frac{\sigma}{2}X) $         \\
      & $\mathcal{D}^{3,4;3}_{\mu,\theta}:\, X^{12}+12X^9+\mu X^8+48X^6+8\mu X^5+\theta X^4+64X^3+16\mu X^2+
                                                                                                4\theta X=$         \\
      & \quad\quad \quad \quad \quad $ =(X^3+\mu X^2+\theta X)\circ (X^4+4X)  $                                     \\
      & $ \mathcal{D}^{4,3;3}_{\nu,\eta}:\, X^{12}+12X^9+\nu X^6+\eta X^3=(X^4+12X^3+\nu X^2+\eta X)\circ X^3  $    \\
    
      & $ \mathcal{D}^{2,2,3;3}_{\nu}:\, X^{12}+12X^9+\nu X^6+(6\nu-216)X^3= $                                      \\
      & \quad \quad \quad \quad \quad $ =[X^2+(\nu-36)X]\circ (X^2+6X)\circ X^3  $                                  \\
      & $ \mathcal{D}^{3,4\mid 4,3;3}:\, X^{12}+12X^9+48X^6+64X^3=  $                                               \\
      & \quad \quad \quad \quad \quad $ =X^3\circ (X^4+4X)=(X^4+12X^3+48X^2+64X)\circ X^3 $                         \\
\hline
\end{tabular}
\end{center}
\begin{center}
\begin{tabular}{|c|l |}
\hline
$10$ & $ \mathcal{D}^{2,6;2}_{\lambda,\mu,\sigma,\nu}:\, X^{12}+12X^{10}+\lambda X^9+\mu X^8+\sigma X^7+\nu X^6+
                                                    (\frac{1}{2}\lambda\mu-54\lambda+6\sigma)X^5+ $       \\
     & \quad $ +(-\frac{9}{2}\lambda^2+\frac{1}{2}\lambda\sigma+\frac{1}{4}\mu^2-54\mu+6\nu+1620)X^4+ $   \\
     & \quad $ +(-\frac{1}{8}\lambda^3-6\lambda\mu+\frac{1}{2}\lambda\nu+\frac{1}{2}\mu\sigma+216\lambda-
                                                                                        18\sigma)X^3+$    \\
     & \quad $ +(-\frac{1}{8}\lambda^2\mu+\frac{27}{2}\lambda^2-3\lambda\sigma-3\mu^2+\frac{1}{2}\mu\nu+
                                                         \frac{1}{4}\sigma^2+216\mu-18\nu-3888)X^2+ $     \\ 
     & \quad $ +(\frac{3}{4}\lambda^3-\frac{1}{8}\lambda^2\sigma+18\lambda
              \mu-3\lambda\nu-3\mu\sigma+\frac{1}{2}\sigma\nu-648\lambda+108\sigma)X= $                   \\   
     & \quad \quad \quad \quad \quad $=[X^2+(-\frac{1}{4}\lambda^2-6\mu+\nu+216)X]\circ   $               \\ 
     & \quad \quad \quad \quad \quad $  \circ [X^6+6X^4+\frac{1}{2}\lambda X^3+(\frac{1}{2}\mu-18)X^2+
                                                                 (-3\lambda+\frac{1}{2}\sigma)X]  $       \\                                               
     & $ \mathcal{D}^{3,4;2}_{\lambda,\mu,\theta}:\, X^{12}+12X^{10}+\lambda X^9+\mu X^8+8\lambda X^7+(\frac{1}{3}
                          \lambda^2+8\mu-320)X^6+ $       \\                                                            
     & \quad $ +(\frac{2}{3}\lambda\mu-16\lambda)X^5+\theta X^4+(\frac{1}{27}\lambda^3+\frac{8}{3}\lambda                           
                                                                           \mu-128\lambda)X^3+    $       \\                                               
     & \quad $ +(\frac{1}{9}\lambda^2\mu-\frac{32}{3}\lambda^2-64\mu+4\theta+3072)X^2  
               +(-\frac{4}{9}\lambda^3-\frac{16}{3}\lambda\mu+\frac{1}{3}\lambda\theta+256\lambda)X= $    \\                                   
     & \quad \quad \quad \quad \quad $ =[X^3+(\mu-48)X^2+(-\frac{4}{3}\lambda^2-16\mu+\theta+768)X]\circ $\\                                                                 
     & \quad \quad \quad \quad \quad $ \circ (X^4+4X^2+\frac{1}{3}\lambda X)  $                           \\                                                                 
     & $ \mathcal{D}^{4,3;2}_{\lambda,\sigma,\eta}:\, X^{12}+12X^{10}+\lambda X^9+54X^8+9\lambda X^7+\nu X^6+
                                                                                   27\lambda X^5+ $       \\
     & \quad $ +(6\nu-567)X^4+\eta X^3+(9\nu-972)X^2+(3\eta-81\lambda)X= $                                \\
     & \quad \quad \quad \quad \quad $=[X^4+\lambda X^3+(\nu-108)X^2+(-27\lambda+\eta)X]\circ (X^3+3X) $  \\
     & $ \mathcal{D}^{6,2;2}_{\mu,\nu,\theta,\varepsilon}:\, X^{12}+12X^{10}+\mu X^8+\nu X^6+\theta X^4+
                                                                                      \varepsilon X^2= $  \\
     & \quad \quad \quad \quad \quad $ =(X^6+12X^5+\mu X^4+\nu X^3+\theta X^2+\varepsilon X)\circ X^2 $   \\
     & $ \mathcal{D}^{2,2,3;2}_{\lambda,\nu}:\, X^{12}+12X^{10}+\lambda X^9+54X^8+9\lambda X^7+\nu X^6+27
     \lambda X^5+$\\                                                                                        
     & \quad $ +(6\nu-567)X^4+(-\frac{1}{8}\lambda^3+\frac{1}{2}\lambda\nu-27\lambda)X^3+         $       \\
     & \quad $ +(9\nu-972)X^2+(-\frac{3}{8}\lambda^3+\frac{3}{2}\lambda\nu-162\lambda)X= $                \\                                                                             
     & \quad \quad \quad \quad \quad $=[X^2+(-\frac{1}{4}\lambda^2+\nu-108)X]\circ (X^2+
                                                                   \frac{1}{2}\lambda X)\circ (X^3+3X) $  \\
     & $ \mathcal{D}^{2,3,2;2}_{\mu,\nu}:\, X^{12}+12X^{10}+\mu X^8+\nu X^6+(\frac{1}{4}\mu^2-54\mu+
     6\nu+1620)X^4+ $  \\                                                                                        
     & \quad $ +(-3\mu^2+\frac{1}{2}\mu\nu+216\mu-18\nu-3888)X^2=         $                               \\
     & \quad\quad\quad\quad\quad $=[X^2+(-6\mu+\nu+216)X]\circ [X^3+6X^2+(\frac{1}{2}\mu-18)X]\circ X^2 $ \\                                                              
     & $ \mathcal{D}^{3,2,2;2}_{\mu,\theta}:\, X^{12}+12X^{10}+\mu X^8+(8\mu-320)X^6+\theta X^4+    $     \\                                                                                                                                                              
     & \quad $ +(-64\mu+4\theta+3072)X^2= $                                                               \\                                                                                                                                                              
     & \quad \quad \quad \quad \quad $=[X^3+(\mu-48)X^2+(-16\mu+\theta+768)X]\circ (X^2+4X)\circ X^2   $  \\                                                                       
     & $ \mathcal{D}^{2,2,3\mid 2,3,2;2}_{\nu}:\, X^{12}+12X^{10}+54X^8+\nu X^6+(6\nu-567)X^4+
                                                                                  (9\nu-972)X^2=    $     \\                                                                                                                                                              
     & \quad \quad \quad \quad \quad $=[X^2+(\nu-108)X]\circ X^2\circ (X^3+3X)=   $                       \\
     & \quad \quad \quad \quad \quad $=[X^2+(\nu-108)X]\circ (X^3+6X^2+9X)\circ X^2   $                   \\                                                                                                                                              
     & $ \mathcal{D}^{2,3,2\mid 3,2,2;2}_{\mu}:\, X^{12}+12X^{10}+\mu X^8+(8\mu-320)X^6+(\frac{1}{4}\mu^2-
                                                                                         6\mu-300)X^4+$   \\                                                                                                                                              
     & \quad $ +(\mu^2-88\mu+1872)X^2=    $                                                               \\                                                                                                                                                              
     & \quad \quad \quad \quad \quad $=[X^2+(2\mu-104)X]\circ [X^3+6X^2+(\frac{1}{2}\mu-18)X]\circ X^2= $ \\
     & \quad \quad \quad \quad \quad $=[X^3+(\mu-48)X^2+(\frac{1}{4}\mu^2-22\mu+468)X]\circ (X^2+4X)
                                                                          \circ X^2   $                   \\                                                                                                                                              
     & $ \mathcal{D}^{2,2,3\mid 2,3,2\mid 3,2,2;2}:\, X^{12}+12X^{10}+54X^8+112X^6+105X^4+36X^2=    $     \\                                                                                                                                                              
     & \quad\quad\quad\quad\quad$=(X^2+4X)\circ X^2\circ (X^3+3X)=                   $                    \\                                                                                                                                                              
     & \quad \quad \quad \quad \quad $ =(X^2+4X)\circ (X^3+6X^2+9X)\circ X^2=   $                         \\
     & \quad \quad \quad \quad \quad $ =(X^3+6X^2+9X)\circ (X^2+4X) \circ X^2   $                         \\                                                                                                                                                                                                                   
\hline
\hline       
\end{tabular}
\end{center}

\newcommand{\arxiv}[1]
{\texttt{\href{http://arxiv.org/abs/#1}{arxiv:#1}}}
\newcommand{\arx}[1]
{\texttt{\href{http://arxiv.org/abs/#1}{arXiv:}}
\texttt{\href{http://arxiv.org/abs/#1}{#1}}}
\newcommand{\doi}[1]
{\texttt{\href{http://dx.doi.org/#1}{doi:#1}}}

\end{document}